\documentclass[a4 paper,12pt]{article}
\usepackage[english]{babel}
\usepackage[T1]{fontenc}
\usepackage{amsfonts}
\usepackage{latexsym}
\usepackage{amsmath}
\usepackage{amsthm}
\usepackage{amssymb}

\title{On modulo-recurrence and window complexity in infinite words}
\author{Julien Cassaigne \and Idrissa Kabor\'e 
}

\usepackage{fancyhdr}
\newtheorem{theorem}{Theorem}[section]
\newtheorem{lemma}{Lemma}[section]
\newtheorem{remark}{Remark}[section]
\newtheorem{definition}{Definition}[section]
\newtheorem{proposition}{Proposition}[section]
\newtheorem{corollary}{Corollary}[section]

\newcommand{\ie}{\emph{i.e.}}

\begin{document}
\renewcommand{\refname}{References}

\maketitle
\setcounter{footnote}{0}
\thanks

\begin{center}
$^1$ Institut de Math\'ematiques de Marseille, 163 Av. Luminy,\\ Case 907
13288 Marseille, France\\
julien.cassaigne@math.cnrs.fr\\
$^2$ UFR-Sciences Exactes et appliquées, Universit\'e Nazi  Boni,\\ 01 BP 1091 Bobo-Dioulasso 01, Burkina Faso\\ ikaborei@yahoo.fr
\end{center}

%\begin{center}
\noindent\textbf{Abstract.} In this paper, we introduce the notions of  uniform modulo-recurrence and strong modulo-recurrence. We show that Sturmian words and maximal complexity words are strongly  modulo-recurrent. Next, a relationship between the window complexity and the classical complexity of the Thue-Morse word is established. We provide an  aperiodic recurrent word such that the window complexity is bounded. We also construct a family of aperiodic recurrent words such that their window complexity $P^w(n)$ is in $O(n^\alpha)$, while at least $n^\alpha$ for infinitely many $n$, where $0<\alpha<1$. Finally, we establish that the window complexity of a uniformly recurrent aperiodic word is unbounded.\\
%\end{center}

\noindent \small{\textbf{Keywords}:} { infinite word, uniform recurrence, modulo-recurrence, complexity.}\\\\

\noindent \small{\textbf{Mathematics Subject Classification:}} 68R15, 11B37, 11B85.

\section{Introduction}

Study of infinite words occupies a prominent place in the field of combinatorics on words. Combinatorics on words began one century ago with the seminal work of A. Thue \cite{19, 20} and it developed extensively in the last five decades \cite{3, 5, 15, 18}, see also references in  \cite{7}. Moreover, it interacts with many fields like Number Theory, Geometry, Dynamical Systems,  Theoretical Computer Science and Physics  (see \cite{3, 5, 14}). In order to get a better  understanding of an infinite word numerous concepts and combinatorial tools where introduced. In this paper,  we focus particularly on two of them: recurrence of factors  and complexity functions. An infinite word is said to be recurrent if all of its factors occur infinitely many times. In the literature, several notions of complexity function are met. They  share the same characteristic  consisting in  counting the number of  words (respecting possibly a certain property) appearing in a given infinite word. The first of them, called subword complexity, was introduced in 1975 by Ehrenfeucht and al. \cite{10}. It counts the number of subwords of a given length in an infinite word (see chapter 4 in \cite{7} for more results on the subject). The reader can consult \cite{2} and \cite{8} for a survey on  notions of complexity. In \cite{9},   a new notion of complexity is introduced: window complexity. The window complexity of an infinite word counts the number of different words obtained when factorising the infinite word into contiguous factors of a given length. In this paper we pursue investigation on window complexity and some recurrence aspects.

The paper is organized as follows. First, we fix some definitions   and notations,  (Section 2). In Section 3 and 4, the notions of uniformly and strongly modulo-recurrent word are respectively introduced. Some results about these notions are given. Section 5 is devoted to the relationship between the window complexity and the classical complexity in the case of the Thue-Morse word. In Section 6, we give a positive answer to one of the three questions pointed out in \cite{9}. Then,  we construct some aperiodic recurrent words such that their window complexity is in $O(n^\alpha)$ where $0<\alpha<1$ (Section 7). Finally, in Section 8, we show that the window complexity of a uniformly recurrent aperiodic word is unbounded.

\section{Preliminaries}

Let $\mathcal{A}^{*}$ be the free monoid generated by a non-empty finite set $\mathcal{A}$ called alphabet. The elements of $\mathcal{A}$ are called letters and those of $\mathcal{A}^{*}$, words. For any word $v$ in $\mathcal{A}^{*}$, $|v|$ denotes the length of $v$, namely the number of its letters. The identity element of $\mathcal{A}^{*}$ denoted by $\varepsilon$ is the empty word; it is the word of length $0$.

An infinite word is a sequence of letters in $\mathcal{A}$ indexed by $\mathbb{N}$. The set of infinite words on $\mathcal{A}$ is denoted $\mathcal{A}^{\omega}$ and the set of finite or infinite words, $\mathcal{A}^{\infty}=\mathcal{A}^{*}\cup \mathcal{A}^{\omega}$.

 We say that an infinite word $u$ is  periodic if there exists a finite word $v$   such that $u=vvv\cdots$; then $u$ is simply denoted $v^\omega $. In other terms, there exists a positive integer $\tau$ such that  $u_{i+\tau}=u_i$ for all $i\geq 0$.

A finite word $u$ of length $n$ formed by repeating a single letter $x$ is simply denoted~$x^n$. The $n$-th power of a finite word $v$, denoted $v^n$, is defined as the concatenation of $n$ copies of~$v$. An infinite word~$u$ is said to be eventually periodic if there exists two finite words $v$ and $w$ such that $u=wvvv\cdots$. In this case, we write $u=wv^\omega $. An infinite word which is not eventually periodic is called aperiodic.

Let $u\in \mathcal{A}^{\infty}$ and $v\in \mathcal{A}^{*}$. The word $v$ is called a factor of $u$ if there exist $u_1\in \mathcal{A}^{*}$ and $u_2\in \mathcal{A}^{\infty}$ such that $u=u_1vu_2$.

For any infinite word $u$ in $\mathcal{A}^{\omega}$, we shall write $u=u_0u_1u_2u_3\cdots$ where $u_i\in A$, for all $i\geq 0$.

 Let $u\in \mathcal{A}^{\omega}$. The language of length $n$ of $u$, denoted by $\mathcal{F}_n(u)$, is the set of factors of $u$ of length $n$:
$$\mathcal{F}_n(u)=\left\{u_ku_{k+1}\cdots u_{k+n-1}: k\geq0\right\}\enspace.$$
The set of all the factors of $u$ is denoted  $\mathcal{F}(u)$. A factor $v$ of length~$n$ of a word $u=u_0u_1u_2\cdots$ appears in $u$ at  position $k$ if $v=u_ku_{k+1}\cdots u_{k+n-1}$. %A word~$u$ is said to be recurrent if any factor of $u$ appears infinitely many times in $u$.

\begin{definition}
An infinite word $u$ is said to be
\begin{itemize}
\item recurrent if every factor of $u$ appears infinitely many times in $u$,%often,
\item  uniformly recurrent   if for all $n\in \mathbb{N}$, there exists $N\in \mathbb{N}$ such that every factor of $u$ of length $N$ contains all of factors of $u$ of length $n$.
\end{itemize}
\end{definition}
For more details on these types of words we refer the reader to   \cite{7, 15} for instance.

A morphism $f$ is a map from $\mathcal{A}^{*}$ to $\mathcal{B}^{*}$ (where $\mathcal{B}$ is an alphabet possibly different from $\mathcal{A}$) such that $f(uv)=f(u)f(v)$ for all $u,\,v\in \mathcal{A}^{*}$.

An infinite word $u$ is said to be  generated by a morphism $f$ if there exists a letter $x\in \mathcal{A}$ such that the words $x$, $f(x)$, $f^2(x)$, $\ldots$, $f^n(x)$, $\ldots$ are longer and longer prefixes of $u$. Then, we denote   $u=f^{\omega}(x)$.

The complexity function of an infinite word $u$ is the map from $\mathbb{N}$ to $\mathbb{N}^{*}$ defined by $P_{u}( n)=\#\mathcal{F}_n(u)$, where $\#\mathcal{F}_n(u)$ is the number of elements in $\mathcal{F}_n(u)$.

Let us recall the classical result due to Morse and Hedlund.
%\begin{theorem}
%Let $u$ be an infinite word. The following properties are equivalent
%\begin{enumerate}
%\item $u$ is eventually periodic,
%\item The complexity function of $u$ is bounded.
%\end{enumerate}
%\end{theorem}

\begin{theorem} (\cite{16}) \label{morse-hedlund} Let  $u$ be an infinte word. Then, the following statements are equivalent:
\begin{enumerate}

\item The infinite word $u$ is eventually periodic,

\item There  exists an integer  $n$ such that $p(n) =p(n+1)$,

\item There  exists an integer  $n$ such that  $p(n)\leq n$,

\item The sequence $(p(n))_n$ is bounded.

\end{enumerate}
\end{theorem}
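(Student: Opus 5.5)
We prove the cycle of implications $1\Rightarrow 4\Rightarrow 2\Rightarrow 3\Rightarrow 1$, with $p=P_u$ throughout.

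The first two implications rest on one observation: $p$ is nondecreasing. Every factor of length $n$ occurring at position $k$ extends to the right to a factor of length $n+1$ at the same position, and this extension map $\mathcal{F}_{n+1}(u)\to\mathcal{F}_n(u)$ (delete the last letter) is surjective.

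\textbf{$1\Rightarrow 4$.} Write $u=wv^\omega$ with $|w|=m$ and $|v|=\tau$. A factor of length $n$ is determined by its starting position $k$. If $k\ge m$, it is determined by $k \bmod \tau$. Hence $p(n)\le m+\tau$ for all $n$.

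\textbf{$4\Rightarrow 2$.} Since $p$ is nondecreasing and bounded, it is an eventually constant integer sequence. So some $n$ satisfies $p(n)=p(n+1)$.

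\textbf{$2\Rightarrow 3$.} Suppose $p(n)=p(n+1)$. We first show $p$ is constant from $n$ on. Equality means each factor $x$ of length $n$ has exactly one right extension $xa\in\mathcal{F}_{n+1}(u)$. Then each factor of length $n+1$ also has a unique right extension: if $ya$ and $yb$ are factors with $|y|=n+1$, write $y=cz$ with $|z|=n$; then $za$ and $zb$ are factors, so $a=b$. Hence $p(n+1)=p(n+2)$, and by induction $p(m)=p(n)$ for all $m\ge n$. If $p(n)\le n$ we are done. Otherwise, for every $N\ge n$ we have $p(N)=p(n)$, and $p(N)\le N$ once $N\ge p(n)$. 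Taking such an $N$ gives the required integer.

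\textbf{$3\Rightarrow 1$.} This is the main step. Suppose $p(n)\le n$ for some $n$. Note $p(0)=1$. The sequence $p(0),p(1),\dots,p(n)$ has $n+1$ terms, is nondecreasing, and starts at $p(0)=1$. Suppose it were strictly increasing on these terms. Then $p(n)\ge p(0)+n=n+1$, a contradiction. So there is some $m<n$ with $p(m)=p(m+1)$.

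By the unique-extension property just shown, every factor of length $m$ has a unique right extension. Thus $u_{k+m}$ is a function of the factor $u_k\cdots u_{k+m-1}$. The factors of length $m$ occurring at positions $0,1,\dots,p(m)$ number $p(m)+1$, so by pigeonhole two of them coincide, say at positions $i<j$. By the deterministic extension, induction on $k$ gives $u_{i+k}=u_{j+k}$ for all $k\ge0$. Hence $u$ is eventually periodic with preperiod $i$ and period $j-i$.

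The only real obstacle is the bookkeeping in the unique-extension argument, namely passing from $p(m)=p(m+1)$ to the fact that each length-$m$ factor determines the next letter. The counting argument for this is the surjectivity of the restriction map between finite sets of equal size.
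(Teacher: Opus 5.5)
Your argument is correct and complete. The paper gives no proof of this statement: it is quoted from Morse and Hedlund \cite{16} and used as a black box, for instance to get $P(n)\ge n+1$ for the aperiodic word of Section 6. So there is no in-paper argument to compare with.

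What you wrote is the standard textbook proof. Monotonicity of $p$ comes from the surjective truncation map $\mathcal{F}_{n+1}(u)\to\mathcal{F}_n(u)$. Next, $p(m)=p(m+1)$ is equivalent to every length-$m$ factor having a unique right extension. Finally, pigeonhole on the occurrences at positions $0,\dots,p(m)$ gives eventual periodicity.

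The degenerate cases are also fine. Statement (3) cannot hold with $n=0$, since $p(0)=1$. And $m=0$ in your last step correctly yields a constant word.

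One simplification: order the cycle as $1\Rightarrow4\Rightarrow3\Rightarrow2\Rightarrow1$. Then the propagation lemma in your $2\Rightarrow3$ (unique extensions passing from length $n$ to length $n+1$) is no longer needed, because:
\begin{itemize}
\item $4\Rightarrow3$ is immediate, since $p\le C$ gives $p(C)\le C$;
\item your counting argument is exactly $3\Rightarrow2$;
\item your unique-extension-plus-pigeonhole argument is exactly $2\Rightarrow1$.
\end{itemize}
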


A Sturmian word $u$ is an infinite word such that $P_{u}(n)=n+1$ for every integer $n\geq0$. Sturmian words are aperiodic infinite words of minimal complexity. These words have been widely studied; we refer the reader to \cite{15, 18} for more details.

A Sturmian morphism is a morphism $f$ defined on a binary alphabet such that for any Sturmian word $u$, $f(u)$ is also Sturmian. Sturmian morphisms are characterized in \cite{berst-seeb}.

We say that $v$ appears in $u$ at   position $\equiv i\mod k$ ($i$ modulo $k$) if there exists $l\in \mathbb{N}$ such that $v=u_{lk+i}  u_{lk+i+1}\cdots u_{lk+i+|v|-1}$~.

\begin{definition}
A factor $v$  of length  $n\geq 2$ of an infinite word $u$ is said to be a $n$-window factor of $u$ if $v$  appears in $u$ at a position  multiple of $n$, $i.e$ $\equiv 0\mod n$. \end{definition} 

 The set of $n$-window  factors of $u$, is denoted  $\mathcal{F}^{w}_n(u)$ : 
$$\mathcal{F}^{w}_n(u)=\left\{u_{kn}u_{kn+1}\cdots u_{(k+1)n-1}: k\geq 0\right\}\enspace.$$

\begin{definition} Let $u=u_0u_1u_2\cdots$ be an infinite word. The window complexity function of $u$ is the map $P^{w}_{u}: \mathbb{N}\longrightarrow \mathbb{N}^*$ defined by
$$P^{w}_{u}(n)=\# \mathcal{F}^{w}_n(u) \enspace.$$
\end{definition}

For a fixed infinite word $u$, the following comparision is obvious:
\begin{eqnarray}
 P^{w}_{u}\leq P_{u} \label{Pf<=P}
\end{eqnarray}

In the sequel, when there is no ambiguity we simply  denote $P(n)$, $P^w(n)$, $\mathcal{F}_{n}$, $\mathcal{F}^{w}(n)$ instead of  $P_u(n)$, $P^{w}_{u}(n)$, $\mathcal{F}_{u}(n)$,  $\mathcal{F}^{w}_{u}(n)$. 

Let $\mathcal{A}$ be an alphabet with $q$ letters. A word $u$ in $\mathcal{A}$ for which $\mathcal{F}(u)=\mathcal{A}^{*}$ is called a maximal complexity word. Its complexity is $P(n) =q^n$. Examples of such words are constructed as follows.  It is well-known that the monoid $\mathcal{A}^{*}$ is countable and can be ordered. Let $\mathcal{A}^{*}=\left\{w_0,\, w_1,\, w_2,\,\cdots\right\}$ be an enumeration on $\mathcal{A}^{*}$. Then, the infinite word $u=w_0w_1w_2w_3\ldots$, obtained by concatenating all words $w_i$,  is called Champernowne word and it has maximal  complexity, since by construction every word in $\mathcal{A}^{*}$ occurs in it.

\section{Modulo-recurrent words}
Modulo-recurrent words were introduced in \cite{12}. 
Let us make a brief survey on these words:

\begin{definition} An infinite word $u=u_0u_1u_2\cdots$ is said to be modulo-recurrent if, for any $k\geq1$, every factor $w$ of $u$ appears in $u$ at every position modulo $k$, \ie,
$$\forall w\in\mathcal{F}(u), \forall k\geq 1, \forall i\in\left\{0,\,1,\,\ldots,\,k-1\right\},\ \exists l\in\mathbb{N} : w=u_{kl+i}u_{kl+i+1}\cdots u_{kl+i+|w|-1}\enspace.
$$
\end{definition}

Modulo-recurrent words  are recurrent words. In \cite{12} and \cite{9} respectively it is shown that Sturmian words and maximal complexity words are modulo-recurrent.

In the second paper the authors obtained a characterization of modulo-recurrent words in terms of window complexity as follows: %wordnow study the window complexity of a particular class of infinite words, introduced in \cite{kt1}: modulo-recurrent words.

\begin{theorem}
\label{thm-modulo}
Let $u$ be a recurrent infinite word. Then, the following assertions are equivalent:
\begin{enumerate}
\item The word $u$ is modulo-recurrent.
\item $\forall n\geq 0,\ P_f(u,\, n)= P(u,\,n)$.
\end{enumerate}
\end{theorem}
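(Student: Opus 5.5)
We prove the two implications separately; here $P_f(u,n)$ denotes the window complexity $P^w_u(n)$, and throughout we use the inequality (\ref{Pf<=P}), which gives $P^w(n)\le P(n)$ for every $n$. The case $n=0$ is trivial, since both sides equal $1$, and the case $n=1$ is also immediate, since both counts equal the number of letters occurring in $u$. So the real content lies in lengths $n\ge 2$.

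\emph{(1) $\Rightarrow$ (2).} Assume $u$ is modulo-recurrent and let $w\in\mathcal{F}_n(u)$. Apply the definition of modulo-recurrence with $k=n$ and residue $i=0$. It yields $l\in\mathbb{N}$ such that $w=u_{ln}u_{ln+1}\cdots u_{ln+n-1}$. This says precisely that $w$ is an $n$-window factor, so $\mathcal{F}_n(u)\subseteq\mathcal{F}^w_n(u)$. Together with the trivial reverse inclusion, we get equality of the two sets and hence $P^w(n)=P(n)$.

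\emph{(2) $\Rightarrow$ (1).} Assume $P^w(n)=P(n)$ for all $n$. Since $\mathcal{F}^w_n(u)\subseteq\mathcal{F}_n(u)$ and both sets are finite with equal cardinality, they coincide: every factor of length $n$ occurs at some position that is a multiple of $n$. Now fix a factor $w$, an integer $k\ge1$ and a residue $0\le i<k$. The plan is to place $w$ inside a longer factor whose length is a multiple of $k$ and in which $w$ sits at offset $i$. We do this in three steps.
\begin{enumerate}
\item By recurrence, $w$ occurs at some position $p\ge i$.
\item Set $m=p-i$ and choose a length $N$ with $N\equiv 0 \pmod k$ and $N\ge i+|w|$. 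Let $v=u_m u_{m+1}\cdots u_{m+N-1}$. This is a factor of length $N$, and $w$ occurs in $v$ at offset $p-m=i$.
\item Since $v\in\mathcal{F}_N(u)=\mathcal{F}^w_N(u)$, the word $v$ occurs at some position $jN$. Because $k$ divides $N$, the occurrence of $w$ inside this copy of $v$ is at position $jN+i\equiv i\pmod k$.
\end{enumerate}
This shows that $w$ appears at every position modulo $k$, which is exactly modulo-recurrence.

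The only delicate point is the one in step 2: we must choose the window length to be a multiple of $k$ rather than $k$ itself. With window length $k$, the factor $w$ might be longer than $k$, and the window positions alone would not control its residue. Using $N$ divisible by $k$, which is possible by taking $N$ to be any multiple of $k$ that is at least $i+|w|$, resolves this. Recurrence is used only to guarantee an occurrence with $p\ge i$ so that $m\ge0$; it could even be avoided by prefixing context, but the hypothesis makes this immediate.
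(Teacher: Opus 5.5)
Your proof is correct and complete. The paper does not prove this theorem itself; it quotes it from \cite{9}, so there is no in-paper argument to compare yours with. Your route is the natural direct one: extend $w$ to a factor $v$ of length $N\equiv 0 \pmod k$ in which $w$ sits at offset $i$, then use $v\in\mathcal{F}^w_N(u)$. Choosing $N$ as a multiple of $k$, rather than $k$ itself, is exactly what makes it work.

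Your closing remark is wrong, however: the recurrence hypothesis cannot be avoided. Take $u=ab^\omega$. For every $n\geq 1$,
\[
\mathcal{F}_n(u)=\mathcal{F}^w_n(u)=\{ab^{n-1},b^n\},
\]
so assertion 2 holds. Yet the factor $a$ occurs only at position $0$, so $u$ is not modulo-recurrent. Recurrence is precisely what provides an occurrence of $w$ with at least $i$ letters to its left. When $w$ occurs only near the beginning of $u$, there is no context to prefix.
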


To prove that a given word is modulo-recurrent, the following lemma allows to check only particular factors (such as prefixes).

\begin{lemma} \label{modulo-prefix}
Let $u$ be an infinite word and $L$ be a subset of $\mathcal{F}(u)$ such that 
$$\forall w\in \mathcal{F}(u), \exists v\in L : w\in \mathcal{F}(v).$$
Assume that, for any $k\geq 1$ and $v\in L$, $v$ appears in $u$ at every position modulo $k$. Then $u$ is modulo-recurrent.
\end{lemma}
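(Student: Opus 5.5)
Fix an arbitrary factor $w\in\mathcal{F}(u)$, an integer $k\geq 1$ and a residue $i\in\{0,1,\ldots,k-1\}$. We must find $l\in\mathbb{N}$ with $w=u_{kl+i}\cdots u_{kl+i+|w|-1}$. The plan is to transfer the modulo-$k$ occurrences of a word $v\in L$ to $w$. The point is that the offset of $w$ inside $v$ is fixed, and can be absorbed by choosing a suitably shifted residue for $v$.

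The first step is to apply the hypothesis on $L$: pick $v\in L$ with $w\in\mathcal{F}(v)$. Then there is an integer $j$ with $0\leq j\leq |v|-|w|$ such that $w=v_jv_{j+1}\cdots v_{j+|w|-1}$, where $v=v_0v_1\cdots v_{|v|-1}$. Consequently, whenever $v$ appears in $u$ at a position $p$, the word $w$ appears in $u$ at position $p+j$.

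The second step is to choose the target residue for $v$. Let $i'\in\{0,\ldots,k-1\}$ be the residue satisfying $i'\equiv i-j \pmod k$. By hypothesis, $v$ appears in $u$ at a position $\equiv i'\mod k$, say at $p=kl'+i'$ for some $l'\in\mathbb{N}$. Then $w$ appears at position $p+j=kl'+i'+j$, and $i'+j\equiv i\pmod k$. Writing $i'+j=i+km$, where $m\geq 0$ because $i'+j\geq 0$ and $i<k$, the position becomes $k(l'+m)+i$. Taking $l=l'+m$ gives the required occurrence of $w$ at a position $\equiv i\mod k$.

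Since $w$, $k$ and $i$ were arbitrary, $u$ is modulo-recurrent. No step is a real obstacle. The only care needed is to check that the residue bookkeeping yields a nonnegative $l$, which the choice $i'\equiv i-j$ with $0\leq i'<k$ guarantees.
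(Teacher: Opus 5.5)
Your proposal is correct and follows the same approach as the paper: choose $v\in L$ containing $w$ at some offset $j$, and transfer the occurrences of $v$ at every residue modulo $k$ to occurrences of $w$. You make explicit the shift $i'\equiv i-j \pmod k$ and the check that $l=l'+m$ is nonnegative, which the paper compresses into the phrase ``it is the same for $w$.''
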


\begin{proof}
Let $w$ be a factor of $u$, and choose $v\in L$ such that $w$ occurs at  some position $j$ in $v$. Since $v$ appears  in $u$ at every position modulo $k$, it is the same for $w$.

\end{proof}

\section{Uniformly modulo-recurrent words}

In this section, we introduce the notion of uniformly modulo-recurrent words. It is a uniform version of modulo-recurrence, in a  similar way as unform recurrence  is a uniform version of recurrence.
%\begin{definition} An infinite word $u$ is said to be
 % uniformly recurrent   if for all $n\in \N$, there exists $N$ such that any factor of $u$ of length $N$ contains any factor of $u$ of length $n$.

%\end{definition}

\begin{definition} An infinite word $u=u_0u_1u_2\cdots$ is said to be uniformly modulo-recurrent if, for any $k\geq1$ and $n\geq 0$ there exists $N$ such that every factor $w$ of $u$ of length $n$  appears in any factor $v$ of $u$ of length $N$ at every position modulo $k$.

\end{definition}

\begin{proposition} Let $u$ be an infinite word. Then, the following statements are equivalent

\begin{enumerate}
	\item  $u$ is  uniformly recurrent and modulo-recurrent.
	\item  $u$ is uniformly modulo-recurrent.
\end{enumerate}
\end{proposition}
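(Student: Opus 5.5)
The direction (2) $\Rightarrow$ (1) is immediate from the definitions, so I will dispose of it first. Take $k=1$ in the definition of uniform modulo-recurrence. It says that for every $n$ there is $N$ such that every factor of length $n$ appears in every factor of length $N$, which is exactly uniform recurrence. For modulo-recurrence, fix a factor $w$ of length $n$ and $k\geq 1$, and let $N$ be given by uniform modulo-recurrence. Any factor $v$ of length $N$ (for instance the prefix of $u$ of length $N$) contains $w$ at every position modulo $k$. An occurrence of $w$ inside $v$ is an occurrence in $u$, so $w$ appears in $u$ at every position modulo $k$. One detail needs care: ``position modulo $k$ in $v$'' must be read relative to $u$'s indexing (or via the offset of $v$). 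Taking $v$ to be a prefix removes any ambiguity.

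For (1) $\Rightarrow$ (2), fix $k\geq1$ and $n\geq 0$. The plan has two steps.

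\textbf{Step 1: a finite witness.} By modulo-recurrence, each of the finitely many pairs $(w,i)$, with $w\in\mathcal{F}_n(u)$ and $0\leq i<k$, has an occurrence of $w$ at some position $\equiv i \mod k$. Let $M$ be larger than the end of all these chosen occurrences. Then the prefix $p=u_0\cdots u_{M-1}$ contains every factor of length $n$ at every residue modulo $k$, with residues measured by absolute position in $u$.

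\textbf{Step 2: transport the witness by uniform recurrence.} This step needs care, because an occurrence of $p$ at a position $j$ shifts all residues by $j$. That is harmless, since shifting by $j$ permutes the residues modulo $k$. So any occurrence of $p$, at any position, still contains every $w\in\mathcal{F}_n(u)$ at every residue modulo $k$. By uniform recurrence, there is $N$ such that every factor of $u$ of length $N$ contains $p$. Hence every factor $v$ of length $N$, at whatever position it occurs in $u$, contains each $w$ at positions of every residue class modulo $k$. This is uniform modulo-recurrence.

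The main obstacle is interpretive rather than technical: I must fix what ``appears in $v$ at every position modulo $k$'' means, either absolute positions in $u$ or positions relative to $v$. I will adopt absolute positions of an occurrence of $v$. With relative positions the same argument works, because relative and absolute positions differ by a fixed shift, which again permutes the residues. The permutation observation in Step 2 therefore handles both readings.
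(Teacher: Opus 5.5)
Your proof is correct and follows essentially the same route as the paper: you build a prefix that contains every length-$n$ factor at every residue modulo $k$ using modulo-recurrence, then use uniform recurrence to get $N$ such that every factor of length $N$ contains that prefix. Your observation that an occurrence at offset $j$ only permutes the residue classes modulo $k$ makes explicit a step the paper leaves implicit. Your handling of (2) $\Rightarrow$ (1), via $k=1$ and by taking $v$ to be a prefix, correctly fills in what the paper calls clear.
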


\begin{proof}
Clearly $(2)$ implies $(1)$. It remains to show that $(1)$ implies $(2)$. Let $k\geq 1$ and $n\geq 0$. Let $w\in \mathcal{F}_n(u)$. Since $u$ is modulo-recurrent, for all $i\in\left\{0,\ldots, k-1\right\}$ there exists $m_{w,\,i}\in \mathbb{N}$ such that $w=u_{km_{w,\,i}+i} \cdots u_{km_{w,\,i}+i+n-1}.$
 So,  take
  $$m=\underbrace{\max}_{w\in \mathcal{F}_{n}(u),\, i\in\left\{0,\,\ldots, k-1\right\}}(km_{w,\,i}+i)\  \textrm{ and}\ z=u_0u_1\ldots u_{m+n-1}.$$
  Then, for any $i\in\left\{0,\, \ldots,\, k-1\right\}$ and for any $w\in\mathcal{F}_n(u)$, $z$ contains at least one occurrence of $w$ at a position $\equiv i\mod k$. Furthermore, since $u$ is uniformly recurrent, there exists $N\in\mathbb{N}$ such that any factor $v$ of length $N$ contains also the factor $z$:
$$\forall v\in\mathcal{F}_N(u): z\in \mathcal{F}(v).$$
Hence, for any $i\in\left\{0,\, \ldots,\, k-1\right\}$ and for any $w\in\mathcal{F}_n(u)$, $v$ contains one occurrence of $w$ at position $\equiv i\mod k$. So,  $u$ is uniformely modulo-recurrent. \end{proof}

\begin{remark}  
\begin{enumerate}
	\item Modulo-recurrence  does  not imply   uniform modulo-recurrence. For example, a Champernowne word is modulo-recurrent but it is not uniformly recurrent.
	\item Uniform  recurrence does not imply uniform modulo-recurrence. For example,  the Thue-Morse word is uniformly recurrent but it is not modulo-recurrent.
	\end{enumerate}
\end{remark}

\section{Strongly modulo-recurrent words}

Here, we identify a subclass of modulo-recurrent  words such that elements remain  modulo-recurrent under action of morphisms respecting  a particular condition. %it is to define some modulo-recurrent words recurrent words getting by action of some morphisms which respect some condition.

\begin{definition}
An infinite  word $u$ is said to be strongly modulo-recurrent  if for any morphism  $\sigma$ on $\mathcal{A}^{*}$ verifying the following property
$$\forall a, b\in \mathcal{A}: a\neq b \Rightarrow  gcd(\left|\sigma(a)\right|,\left|\sigma(b)\right|)=1$$
then,   the word $\sigma(u)$ is modulo recurrent.
\end{definition}

\begin{lemma} \label{Rauzy-pairs} (Rauzy's pair construction \cite{17}). Let $\mathcal{A}=\left\lbrace a,b \right\rbrace$ and $x$, $y$ be two coprime positive integers. Then there exists a Sturmian morphism $f$ such that $|f(a)|=x$ and $|f(b)|=y$.

\end{lemma}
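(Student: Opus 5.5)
We argue by induction on $x+y$, following the subtractive form of the Euclidean algorithm. The idea is that composing a Sturmian morphism on the right with a suitable elementary Sturmian morphism adds the length of one image to the other. So every coprime pair $(x,y)$ can be reached from $(1,1)$ by such steps.

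First we record two facts. The first is that the composition of two Sturmian morphisms is Sturmian, which is immediate from the definition: if $f$ and $g$ send every Sturmian word to a Sturmian word, so does $f\circ g$. The second is that the following morphisms are Sturmian:
\begin{itemize}
\item the exchange $E : a\mapsto b,\ b\mapsto a$;
\item the Fibonacci morphism $\varphi : a\mapsto ab,\ b\mapsto a$;
\item hence, by the first fact, $g=E\circ\varphi\circ E$, which is $a\mapsto ba,\ b\mapsto b$;
\item and $h=E\circ g\circ E : a\mapsto a,\ b\mapsto ab$.
\end{itemize}
That $E$ and $\varphi$ are Sturmian is part of the characterization of Sturmian morphisms in \cite{berst-seeb}, which I would cite rather than reprove.

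The key computation is how the length vector transforms under composition. If $f'$ satisfies $|f'(a)|=x'$ and $|f'(b)|=y'$, then
\[
|f'\circ g(a)|=|f'(ba)|=x'+y', \qquad |f'\circ g(b)|=y'.
\]
Similarly $f'\circ h$ has lengths $(x',\,x'+y')$.

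Now we run the induction on $x+y$.
\begin{itemize}
\item Base case. If $x=y$, coprimality forces $x=y=1$, and the identity morphism works.
\item Case $x>y$. The pair $(x-y,\,y)$ consists of positive integers, is still coprime (since $\gcd(x-y,y)=\gcd(x,y)=1$), and has smaller sum. The induction hypothesis gives a Sturmian morphism $f'$ with lengths $(x-y,\,y)$. Then $f=f'\circ g$ is Sturmian with lengths $(x,y)$.
\item Case $y>x$. Symmetrically, apply the induction hypothesis to $(x,\,y-x)$ and take $f=f'\circ h$.
\end{itemize}

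The only step that is not bookkeeping is knowing that the elementary morphisms $E$ and $\varphi$ are Sturmian. Everything else is closure under composition and the length computation above. For $E$ this is trivial: swapping letters preserves the complexity function. For $\varphi$ I would invoke \cite{berst-seeb}. If a self-contained argument is wanted, one can show directly that $\varphi$ maps a Sturmian word to a balanced aperiodic word, using the characterization of Sturmian words as the aperiodic balanced binary words.
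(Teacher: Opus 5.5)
Your proof is essentially the paper's proof. Both argue by induction on $x+y$ along the subtractive Euclidean algorithm and obtain $f$ as $f'$ composed on the right with an elementary Sturmian morphism. The paper uses $a\mapsto ab,\ b\mapsto b$ and cites \cite{17}; you use $a\mapsto ba,\ b\mapsto b$ for the case $x>y$ and its conjugate by $E$ for $y>x$.

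One computational slip needs correcting. $E\circ\varphi\circ E$ sends $a\mapsto b$ and $b\mapsto ba$, so it is not the morphism $g$ you want. The morphism $a\mapsto ba,\ b\mapsto b$ is $g=E\circ\varphi$. With that, $h=E\circ g\circ E=\varphi\circ E$ is indeed $a\mapsto a,\ b\mapsto ab$. After this fix, your length computation and the induction go through unchanged.
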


\begin{proof}
By induction on $x+y$. If $x=y=1$, let $f=Id$. Otherwise, assume that for instance that $x>y$ and let $x'=x-y$ and $y'=y$ as in the additive version of Euclid's algorithm. By induction hypothesis, there is a Sturmian morphism $f'$ such that 
$|f'(a)|=x'$ and $|f'(b)|=y'$. Let $f(a)=f'(ab)$ and $f(b)=f'(b)$ then $f$ is Sturmian by \cite{17}.
\end{proof}

\begin{theorem} \label{smr}
Every Sturmian word is strongly modulo-recurrent.

\end{theorem}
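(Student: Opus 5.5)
Let $u$ be a Sturmian word over $\mathcal{A}=\{a,b\}$ and let $\sigma$ be a morphism with $\gcd(|\sigma(a)|,|\sigma(b)|)=1$. We must show that $\sigma(u)$ is modulo-recurrent. The plan is to reduce to a Sturmian morphism with the same lengths and then use that Sturmian words are modulo-recurrent \cite{12}.

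\emph{Step 1 (reduction to Sturmian images).} Set $x=|\sigma(a)|$ and $y=|\sigma(b)|$. By Lemma~\ref{Rauzy-pairs} there is a Sturmian morphism $f$ with $|f(a)|=x$ and $|f(b)|=y$. Then $f(u)$ is Sturmian, hence modulo-recurrent. The occurrences of $\sigma(a)$- and $\sigma(b)$-blocks in $\sigma(u)$ start at exactly the same positions as the $f(a)$- and $f(b)$-blocks in $f(u)$, since both are determined by the lengths and by $u$.

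\emph{Step 2 (positions of blocks).} By Lemma~\ref{modulo-prefix}, it suffices to show that each prefix $\sigma(p)$, with $p$ a prefix of $u$, occurs in $\sigma(u)$ at every position modulo $k$. An occurrence of $p$ at position $j$ in $u$ gives an occurrence of $\sigma(p)$ at position $|\sigma(u_0\cdots u_{j-1})| = x\,|u_0\cdots u_{j-1}|_a + y\,|u_0\cdots u_{j-1}|_b$. So we need: for every $k$ and residue $r$, there is an occurrence $j$ of $p$ with $x|u_{[0,j)}|_a+y|u_{[0,j)}|_b\equiv r \pmod k$.

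\emph{Step 3 (the main obstacle).} We must transfer this condition to $f(u)$. Let $q=f(p)$. Since $f(u)$ is modulo-recurrent, $q$ occurs at positions of every residue mod $k$. The difficulty is that an occurrence of $q$ in $f(u)$ need not be aligned with the block decomposition, i.e.\ need not come from an occurrence of $p$ in $u$. I would handle this by lengthening $p$ enough that every occurrence of $f(p)$ in $f(u)$ is aligned. This is the synchronization (recognizability) property of Sturmian morphisms applied to a Sturmian word, possibly up to a bounded shift. Precisely: for $p$ long enough there is a constant $c$, depending only on $f$ and on the first letters of $p$, such that every occurrence of $f(p)$ at position $m$ in $f(u)$ corresponds to an occurrence of $p$ at some $j$ with $m=|f(u_{[0,j)})|+c$.

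Alternatively, one can use only the injectivity of $f$ together with unique desubstitution of Sturmian words (Mignosi--Séébold), taking a longer $p'$ containing $p$ in its interior. In either form, occurrences of $f(p')$ at all residues yield aligned occurrences of $p$ with $x|u_{[0,j)}|_a+y|u_{[0,j)}|_b$ taking every residue mod $k$. Step 2 then gives occurrences of $\sigma(p)$ in $\sigma(u)$ at every residue mod $k$.

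\emph{Step 4 (conclusion).} Since every factor of $\sigma(u)$ lies inside some $\sigma(p)$ with $p$ a prefix of $u$ ($u$ is recurrent), Lemma~\ref{modulo-prefix} shows that $\sigma(u)$ is modulo-recurrent. Hence $u$ is strongly modulo-recurrent.

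Step 3, the synchronization argument, is the delicate point. If it resists, I would fall back on a direct route: use $\gcd(x,y)=1$ together with the fact that $u$ is modulo-recurrent for the pair-count vector. That is, show directly that the pairs $(|u_{[0,j)}|_a \bmod k,\ |u_{[0,j)}|_b \bmod k)$ over occurrences $j$ of $p$ cover enough residues of $xA+yB$ modulo $k$.
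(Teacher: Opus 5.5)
Your route is essentially the paper's. Lemma~\ref{Rauzy-pairs} gives a Sturmian $f$ with the same lengths as $\sigma$; $f(u)$ is Sturmian and hence modulo-recurrent; a synchronization property pulls occurrences in $f(u)$ back to occurrences in $u$ at cutting points; and these positions transfer to $\sigma(u)$ because they depend only on $|f(a)|$ and $|f(b)|$. Your padded variant ($p$ inside a longer $p'$) plays the role of the paper's words $z'f(w)z$, where $z,z'$ are the longest common prefix and suffix of $f(ab)$ and $f(ba)$; every occurrence of $z'f(w)z$ sits at a position $|f(s)|-|z'|$ with $u=swt$.

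Rely only on that padded form, because the unpadded claim in Step~3 is false. Take the morphism $a\mapsto ab$, $b\mapsto b$, which is what Lemma~\ref{Rauzy-pairs} produces for $(x,y)=(2,1)$. Here $f(bq)$ is a suffix of $f(aq)$, so for every left special factor $q$ of $u$ the word $f(bq)$ also occurs one letter after a cutting point, at occurrences of $aq$. No fixed shift $c$ accounts for both kinds of occurrences.
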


\begin{proof}  
Let $u$ be a Sturmian word on $\mathcal{A}=\left\{a,\,b\right\}$ and  $\tau$ a morphism on $\mathcal{A}^{*}$ such that $\left|\tau(a)\right|$ and $\left|\tau(b)\right|$ are coprime.

 By Lemma \ref{Rauzy-pairs},   we can find a Sturmian morphism $\sigma$ such that for all $x\in \mathcal{A}$,  $\left|\sigma\left(x\right)\right|=\left|\tau\left(x\right)\right|$. So, $\sigma\left(u\right)$ is Sturmian. Thus,  $\sigma\left(u\right)$ is modulo-recurrent. Let $z$ be  the longest common prefix of $\sigma(ab)$ and $\sigma(ba)$, and $z'$ be their longest common suffix. It is known \cite{17} that $|zz'|=|\sigma(ab)|-1$. Moreover, if $w$ is a factor of $u$, then $z'\sigma(w)z$ is a factor of $\sigma(u)$, and $z'\sigma(w)z$ occurs at some position $n$ in $\sigma (u)$. So, if we put  $u=pws$  then we have $|\sigma(p)|=n+|z'|$.%\cite{desubstitution sturmienne}.
  Let $w$ be a factor of $u$. Then, for all  $k>1$ and $i\geq 0$, $\sigma\left(w\right)$ appears  in  $\sigma\left(u\right)$ at some position $n$ with $n\equiv i\mod k$ since $\sigma\left(u\right)$ is modulo-recurrent. So, we can write $u=pws$ with $p$ verifying $\left|\sigma\left(p\right)\right|=n$. Since, $\tau(u)=\tau(p)\tau(w)\tau(s)$ with $\left|\tau\left(p\right)\right|=n$, then $\tau\left(w\right)$ appears in $\tau\left(u\right)$ at the position $n$. \end{proof}

\begin{theorem}
Champernowne  words are strongly modulo-recurrent.
\end{theorem}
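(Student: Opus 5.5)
Let $u=w_0w_1w_2\cdots$ be a Champernowne word on $\mathcal{A}$, where $(w_j)$ enumerates $\mathcal{A}^*$. Let $\sigma$ be a morphism such that $\gcd(|\sigma(a)|,|\sigma(b)|)=1$ whenever $a\neq b$. By Lemma \ref{modulo-prefix}, applied to $\sigma(u)$ with $L=\{\sigma(v): v\in\mathcal{A}^*\}$, it suffices to prove the following: for every finite word $v$, every $k\geq1$ and every $i\in\{0,\ldots,k-1\}$, the word $\sigma(v)$ occurs in $\sigma(u)$ at a position $\equiv i\bmod k$. The hypothesis of the lemma holds because every factor of $\sigma(u)$ lies inside $\sigma(v)$ for some factor $v$ of $u$, and every finite word is a factor of $u$.

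We may assume the morphism is non-erasing, or at least that $\sigma(u)$ is infinite; the coprimality condition forces at most one letter to have an image of length $0$, and only in the case where the other images have length $1$. We treat the main case, where $\#\mathcal{A}\geq2$ and some two letters $a\neq b$ have coprime lengths $p=|\sigma(a)|$ and $q=|\sigma(b)|$. (If $\#\mathcal{A}=1$ the condition is vacuous; then $\sigma(u)$ is periodic, so this degenerate case has to be excluded or checked separately.) Since $\gcd(p,q)=1$, the set $\{rp+sq : r,s\geq0\}$ contains every residue modulo $k$, and indeed every integer $\geq (p-1)(q-1)$. Fix $r,s\geq0$ with $rp+sq\equiv i\bmod k$.

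The key step uses the maximal complexity of $u$. Take any word $x$ whose image has length $rp+sq$, for instance $x=a^rb^s$. The word $xv$ is a factor of $u$, so it occurs in $u$ at some position, say $u=p'\,x\,v\,s'$. Then $\sigma(v)$ occurs in $\sigma(u)$ at position $|\sigma(p')|+rp+sq$. This residue is not yet controlled, because $|\sigma(p')|$ is arbitrary. To remove this dependence, I use the fact that $u$ contains every finite word, in particular a factor of the form $y\,x\,v$. First fix one occurrence of $v$ preceded by anything, at position $m=|\sigma(\text{prefix})|$. Then choose the word $xv$, which is itself a factor, and observe that its occurrence in $u$ is preceded by some prefix whose image length is again uncontrolled. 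This is the main obstacle: the position of a factor in $u$ is not free, so I cannot pick the prefix.

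The fix I would try first is the following. Prefixes are not controllable, but factors are arbitrary. Consider the finite set of residues $R=\{|\sigma(p')| \bmod k\}$ taken over all prefixes $p'$ of $u$. Let $c$ be the residue of $|\sigma(p')|$ at some occurrence of a long word $z$. For each target residue $i$, choose $x_i$ with $|\sigma(x_i)|\equiv i-c\bmod k$. Then let $z$ be the concatenation of all words $x_i v$, each separated by fixed padding, and look at one occurrence of $z$ in $u$; it exists because $z\in\mathcal{A}^*$. Inside this occurrence the starting residue $c$ is fixed, and the offset of the copy of $v$ following $x_i$ is $c+|\sigma(\text{stuff before } x_i)|+|\sigma(x_i)|$. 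To make this offset $\equiv i$, I build $z$ sequentially. Having written a prefix $z_j$ of $z$, append a word $x$ so that $c+|\sigma(z_jx)|\equiv i_j\bmod k$, which is possible by coprimality since any residue is reachable. Then append $v$. The only remaining issue is that $c$ depends on the occurrence of $z$, which depends on $z$. I resolve this by making $z$ contain a block realising every residue class, that is, by building, for each of the $k$ possible values of $c$ and each $i$, a segment that works for that $c$. Concatenating these finitely many segments into a single word $Z$, whatever the residue $c$ of the occurrence of $Z$, some segment places $\sigma(v)$ at a position $\equiv i\bmod k$. This completes the argument.
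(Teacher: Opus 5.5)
Your argument is correct and uses the same idea as the paper. Coprimality of the image lengths lets you pad copies of $v$ so that, inside one finite word, their images start at every residue modulo $k$; since this word occurs somewhere in the Champernowne word, the unknown residue of its starting position does not matter. The paper implements this more economically with the single block $(wa^nb^p)^m$, whose image length $j$ satisfies $j\equiv 1 \bmod m$, so the $m$ copies sit at offsets $0,j,\ldots,(m-1)j$. Your indexing of segments by pairs $(c,i)$ is redundant, because $k$ copies at offsets covering all residues already work for every starting residue $c$.
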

\begin{proof}  
Let $u$ be a Champernowne binary word and  $\tau$ a morphism on $\mathcal{A}^{*}$ such that $gcd\left(\left|\tau(a)\right|,\left|\tau(b)\right|\right)=1$. Let $w$ be a factor of $u$,  $m$ and $i$ some integers. Write $\left|\tau(a)\right|=x$, $\left|\tau(b)\right|=y$ and consider the set 
$$E=\left\{\left|\tau\left(w\right)\right|+nx+py: n,\, p\in \mathbb{N}\right\}.$$
 Since $gcd(x,\,y)=1$, there exist $u,\, v\in\mathbb{Z}$ such that $ux+vy=1$. So, the set $E$ is such that $E=\mathbb{N}- H$ where $H$ is a finite subset of $\mathbb{N}$. Thus, there exists $n_0$ such that for all $l\geq n_0$ one has $l\in E$. Let $j=\left|\tau\left(w\right)\right|+nx+py \in E$ such that $j\equiv 1 \mod m$. Since the word $w'=\left(wa^nb^p\right)^m$ occurs  in $u$ then $\tau(w')$  appears in $\tau(u)$ at some position $m_0$. It follows that $\tau(w)$ appears in $\tau(u)$ at the  positions $m_0$, $m_0+j$, $m_0+2j$, $\ldots$, $m_0+(m-1)j$. Among these positions, one is $\equiv i\mod m$ because $j\equiv 1\mod m$. \end{proof}

\begin{remark}  
If a word $u$ is strongly modulo-recurrent then $u$ is modulo-recurrent.
\end{remark}

Indeed, for any strongly modulo-recurrent word $u$, $Id_{\mathcal{A}}(u)$ is modulo-recurrent, where $Id$ is the identity morphism.
Note that the converse of this remark is false. The  following example shows it. Consider the morphisms $\sigma_1: a\mapsto aa,\ b\mapsto b$ and $\sigma_2: a\mapsto a,\ b\mapsto bb$. We have $\sigma_2\circ\sigma_1: a\mapsto aa,\ b\mapsto bb$. Let $u$ be a sturmian word. Then, by Theorem \ref{smr} $\sigma_1(u)$ is modulo-recurrent. However $\left(\sigma_2\circ\sigma_1\right)(u)=\sigma_2\left(\sigma_1\left(u\right)\right)$ is not modulo-recurrent. So, the word $\sigma_1(u)$  is modulo-recurrent but not strongly modulo-recurrent.

%It is well-known that in the subclass of modulo-recurrent words, window complexity coincides with classical complexity.  Since $P^{w}_{u}\leq P_u$, it seems interesting to look  finely at this inequality in the other cases, at least for some classical words.

 \section{About  window complexity of the Thue-Morse word}

%The Thue-Morse  word, $\mathbf{t}$, is one of the classical infinite words which has been the most studied. It is   generated by the morphism  $\theta : a\mapsto ab,\, b\mapsto ba$. 

The Thue-Morse  word, $\mathbf{t}$, is one of the classical infinite words which has been the most studied. It is   generated by the morphism  $\theta : a\mapsto ab,\, b\mapsto ba$. An equivalent definition of $\mathbf{t}$ is:
\begin{definition} The Thue-Morse word $\mathbf{t}=\mathbf{t}_0\mathbf{t}_1\mathbf{t}_2\cdots$ is defined recursively by $\mathbf{t}_0=a$, $\mathbf{t}_1=b$ and for all $n\geq 0$, $\mathbf{t}_{2n}=\mathbf{t}_n$, $\mathbf{t}_{2n+1}=\overline{\mathbf{t}_{n}}$, where $\overline{a}=b$ and vice-versa.

\end{definition}

From this definition it follows that for all $j,\,n\in \mathbb{N}$, $ \mathbf{t}_{2^{j}n}=\mathbf{t}_n$ and if $0\leq n<2^j$ then $\mathbf{t}_{2^{j}+n}=\overline{\mathbf{t}_{n}}.$

For more information on this famous word we refer the reader to \cite{1,2, 4, 6}. 

In Section 2, we have seen that the subclass of modulo-recurrent words is the set of words for which window complexity of the elements coincides with  classical complexity.  Since $P^{w}_{u}\leq P_u$, it seems interesting to look  finely at this inequality in the other cases, at least for some classical words. In this section we see what happens about the window complexity of the Thue-Morse word.

One has:

 \begin{theorem} \label{TM} The window complexity of $\mathbf{t}$ satisfies:
 $$\forall n\in \mathbb{N},\ P^w(n)=P\left(\frac{n}{2^{\upsilon_2\left(n\right)}}\right)$$
 where $\upsilon_2\left(n\right)$ denotes diadic valuation of $n$. In particular, we have $ P^w(n)=P(n)$ if $n$ is odd.

 \end{theorem}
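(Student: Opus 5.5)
The proof has two steps. First I reduce to odd lengths using the fixed-point property $\mathbf{t}=\theta^j(\mathbf{t})$. Then I show that for odd $n$ every factor of length $n$ occurs at a position multiple of $n$. Together with (\ref{Pf<=P}) this gives $P^w(n)=P(n)$ for odd $n$.

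\emph{Step 1 (reduction to odd $n$).} Write $n=2^j m$ with $m$ odd, $j=\upsilon_2(n)$. Since $\mathbf{t}=\theta^j(\mathbf{t})$ and $\theta^j$ is $2^j$-uniform, the block $\mathbf{t}_{kn}\cdots\mathbf{t}_{(k+1)n-1}$ is exactly $\theta^j(\mathbf{t}_{km}\cdots\mathbf{t}_{(k+1)m-1})$. Hence $\mathcal{F}^w_n=\theta^j(\mathcal{F}^w_m)$. The map $\theta^j$ is injective on words of a fixed length, because its images of $a$ and $b$ are distinct words of the same length. Therefore $P^w(n)=P^w(m)$, and it remains to prove $P^w(m)=P(m)$ for odd $m$.

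\emph{Step 2 (odd $m$).} Let $w$ be a factor of length $m$.
\begin{itemize}
\item By recurrence, $w$ occurs at some position $p$ with $p+m\le 2^r$ for some $r$, i.e.\ inside the prefix $\theta^r(a)$ of length $2^r$.
\item For every $s$ with $\mathbf{t}_s=a$, the block of length $2^r$ at position $2^r s$ equals $\theta^r(a)$. So $w$ occurs at every position $p+2^r s$ with $\mathbf{t}_s=a$.
\item Since $m$ is odd, $2^r$ is invertible mod $m$. The condition $p+2^r s\equiv 0 \pmod m$ thus amounts to $s\equiv c\pmod m$ for a fixed residue $c$.
\item It therefore suffices to show that every residue class mod $m$ contains some $s$ with $\mathbf{t}_s=a$, i.e.\ with even binary digit sum.
\end{itemize}

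\emph{Main obstacle.} The expected obstacle is this last parity statement. The plan is as follows. Take any $s\equiv c\pmod m$. If $s$ already has even digit sum, we are done. Otherwise, let $e$ be the multiplicative order of $2$ modulo $m$, and set $N=\sum_{i=0}^{m-1}2^{ei}$. Then $N\equiv m\equiv 0\pmod m$, and $N$ has exactly $m$ binary digits equal to $1$, an odd number. Choose $L$ with $2^L>s$ and put $s'=s+2^L N$. The binary digits of $s$ and of $2^L N$ do not overlap, so the digit sum of $s'$ is the digit sum of $s$ plus $m$, which is even. Moreover $s'\equiv s\equiv c\pmod m$, so $\mathbf{t}_{s'}=a$ as required. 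Then $w$ occurs at a position multiple of $m$, which gives $\mathcal{F}^w_m=\mathcal{F}_m$ and completes the proof.
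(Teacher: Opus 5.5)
Your proof is correct and has the same overall structure as the paper's. Both use the reduction $\mathcal{F}^w_{2^j m}=\theta^j(\mathcal{F}^w_m)$ with injectivity of $\theta^j$. Both then handle odd $m$ by placing the prefix $\theta^r(a)$, which contains $w$, at a position $2^r s$ with $\mathbf{t}_s=a$ and $s$ in a prescribed residue class.

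You differ only in how you produce such an $s$. The paper chooses $j$ with $2^j\equiv 1 \pmod n$ and writes $2^j=ln+1$, $n=2q+1$. It checks by hand that $\mathbf{t}_{(l+1)n}=\mathbf{t}_{2^j+2q}$ and $\mathbf{t}_{(l+2)n}=\mathbf{t}_{2^j+4q+1}$ are complementary. Applying $\theta^j$, it gets two positions $2^j(l+1)n+i$ and $2^j(l+2)n+i$ in the same class modulo $n$ carrying complementary letters, so one of them is $a$.

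You only need $2^r$ to be invertible modulo $m$. You then prove directly that every residue class modulo an odd $m$ contains an integer with even binary digit sum. You do this by appending the multiple $N=\sum_{i<m}2^{ei}$ of $m$, which has an odd number of binary ones.

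What each route buys:
\begin{itemize}
\item Your route is shorter and isolates a clean, reusable lemma: both digit-sum parities occur in every residue class modulo an odd number.
\item The paper's route stays within the self-similarity $\mathbf{t}_{2^j+k}=\overline{\mathbf{t}_k}$ and never uses the digit-sum description of $\mathbf{t}$.
\item You state that description without proof. It follows immediately from $\mathbf{t}_{2k}=\mathbf{t}_k$ and $\mathbf{t}_{2k+1}=\overline{\mathbf{t}_k}$.
\end{itemize}

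One cosmetic point: finding an occurrence of $w$ inside some prefix $\theta^r(a)$ does not need recurrence. Any occurrence works once $r$ is large enough.
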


\begin{proof} 
 $\bullet$ Observe that for all $n\in \mathbb{N}$, window factors of length $2n$ are images by $\theta$ of window factors of length $n$ of $\mathbf{t}$: $\mathcal{F}^{w}_{2n}(\mathbf{t})=\theta\left(\mathcal{F}^{w}_{n}(\mathbf{t})\right)$. So,  as $\theta$ is injective,
  $P^{w}( 2n)=P^{w}( n)$. Iterating the process we get
 $$\forall n\in \mathbb{N},\ P^w( n)=P^{w}\left(\frac{n}{2^{\upsilon_2\left(n\right)}}\right).$$
                       $\bullet$ Let $n$ be an   odd integer. We have to show that
  $\ P^w(n)=P\left(n\right).$
%   Since $n$ is odd then we can write $n=2q+1$ where $q\in \mathbb{N}$. Furthermore, there exists an integer $j$ such that $2^j=1\mod n$ with $2^j>2n$. In other terms, there exists $l\geq 2$ such that $2^j=ln+1$.

It suffices to get that any factor of $u$ of length $n$ is an $n$-window factor of $\mathbf{t}$. Let $v\in \mathcal{F}_n(\mathbf{t})$. Then, there exists $m\in \mathbb{N}$ such that $v=\mathbf{t}_{m}\mathbf{t}_{m+1}\ldots \mathbf{t}_{m+n-1}$. Let us show that $v\in \mathcal{F}^{w}_{n}(\mathbf{t})$. 
   Since $n$ is odd, then we can write $n=2q+1$ where $q\in \mathbb{N}$. Furthermore, there exists an integer $j$ such that $2^j\equiv 1\mod n$ with $2^j\geq m+2n$. In other terms, $v$ appears at position $m$ in $\theta^{j}(a)$.

Let   $l\in \mathbb{N}$ be such that $2^j=ln+1$. Then, we have:
 $$(l+1)n=(2^j-1)+n=2^j+2q \ \mathrm{and} \ (l+2)n=2^j+2n-1=2^j+4q+1.$$
  So, we have  $\mathbf{t}_{(l+1)n}=\overline{\mathbf{t}_{2q}}$ and $\mathbf{t}_{(l+2)n}=\overline{\mathbf{t}_{4q+1}}=\mathbf{t}_{2q}$ since  $2^j>2q$ and $2^j>4q+1$.
 %Similarly, we have
 %$$(l+2)n=2^j+2n-1=2^j+4q+1$$
 %Hence $t_{\left(l+2\right)n}=\overline{t_{4q+1}}$
 %since $2^j>4q+1$.
 Thus, we have $\mathbf{t}_{(l+2)n}=\overline{\mathbf{t}_{(l+1)n}}$. By applying $\theta^j$ to $\mathbf{t}$ we get $\mathbf{t}_{2^{j}(l+2)n+i}=\overline{\mathbf{t}_{2^{j}(l+1)n+i}}$ for all  $0\leq i< 2^j-1$. Choose $i$  such that $i+m=0\mod n$.  So, there exists $k\in \lbrace2^{j}(l+1),\, 2^{j}(l+2)\rbrace$ such that $\mathbf{t}_{kn+i}=a$. It follows that
 $\mathbf{t}_{2^{j}(kn+i)}\ldots \mathbf{t}_{2^{j}\left( kn+i\right)+2^j-1 }=\theta ^{j}(a)$.
 
  Therefore $v$ appears in $\mathbf{t}$ at position $2^{j}(kn+i)+m$. 
 Now, $2^{j}(kn+i)+m\equiv i+m \equiv 0\mod n$.
 That means $v$ is an $n$-window factor of $\mathbf{t}$: $v\in \mathcal{F}^{w}_{n}(\mathbf{t})$.
 
% Now, in $\mathbf{t}$ it is known that $t_{4q+1}=\overline{t_{2q}}$. Thus, $t_{\left(l+2\right)n}=\overline{t_{\left(l+1\right)n}}$.  It follows that
% $$\forall 0\leq i<2^j :\ t_{2^j\left(l+2\right)n+i}=\overline{t_{2^j\left(l+1\right)n+i}}.$$
% Hence $t_{kn+i}=a$ with $k\in\left\{2^j\left(l+1\right),\, 2^j\left(l+2\right)\right\}$.
%
%So, we deduce that any factor of $\mathbf{t}$ is in $\mathcal{F}^{w}_{n}(\mathbf{t})$.
  \end{proof}

From Theorem \ref{TM} we deduce that for the Thue-Morse word, the  set $\Pi=\left\{n\in \mathbb{N}: P^{w}(n)=P(n) \right\}$ is infinite.

 By this result  and   inequality (\ref{Pf<=P}) it becomes interesting to ask the following question:
 Does there exist some words for which the window complexity satisfies:
 $$\forall n\geq 2, \ P^{w}(n)<P(n)?$$
 Besides giving a response to a question asked in \cite{9}, we provide in the next Section a positive answer to the above question. 
 
\section{An example of recurrent aperiodic word with bounded window-complexity}

  With Tapsoba,  the authors asked  in \cite{9} the following question: Does there exist infinite recurrent and aperiodic words for which  the window complexity function is bounded?

Our aim in this section is to construct a recurrent word with bounded window complexity in order to give an answer to the above question.

Consider the sequence of positive integers $\left(k_n\right)$ defined by $k_0=1$, and  $k_{n+1}= \left(2k_n\right)!$ for all $n\geq 0$. 

The first few terms of $(k_n)$  are:
$$
\begin{array}{|r|c|c|c|c|c|c|}
	\hline
	n	&0&1&2&3&4&\cdots\\
	\hline
	k_n	&1&2&24&48!& \left(2\times 48!\right) ! &\cdots\\
	\hline
\end{array}
$$
 The sequence $\left(k_n\right)$ is strictly increasing.

\begin{lemma}
$\forall i\geq 1:\ k_i>\sum_{j=0}^{i-1}k_j$
\end{lemma}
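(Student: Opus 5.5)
I would argue by induction on $i$, using only the recursion $k_{i}=(2k_{i-1})!$ and the elementary bound $m!\geq m$ for every $m\geq 1$ (in fact $(2m)!\geq 2m$). The key intermediate claim is the stronger doubling statement $k_i\geq 2k_{i-1}$ for all $i\geq 1$. It follows at once since $k_i=(2k_{i-1})!\geq 2k_{i-1}$. Here $k_{i-1}\geq 1$, which itself follows from $k_0=1$ and the recursion by a trivial induction, since factorials are positive.

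For the base case $i=1$, I simply check $k_1=2>1=k_0$. For the inductive step, I assume $k_i>\sum_{j=0}^{i-1}k_j$. Then
$$\sum_{j=0}^{i}k_j=k_i+\sum_{j=0}^{i-1}k_j<2k_i\leq (2k_i)!=k_{i+1},$$
which is exactly the statement at rank $i+1$.

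I do not expect a genuine obstacle. The only point needing care is that the strict inequality comes from the induction hypothesis, while the doubling bound $2k_i\leq(2k_i)!$ need only be non-strict. Alternatively, one could bypass induction on the sum entirely: $k_j\geq 2^{j}$ by doubling, and then compare with a geometric series. However, the direct induction above is cleaner and is the route I would write.
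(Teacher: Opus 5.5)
Your proof is correct and is exactly the paper's argument: induction on $i$ with base case $k_1=2>1=k_0$ and inductive step $k_{i+1}=(2k_i)!\geq 2k_i>k_i+\sum_{j=0}^{i-1}k_j$. One small point about your alternative: the lower bound $k_j\geq 2^j$ cannot bound the sum from above, so that route would need the doubling in the form $k_j\leq 2^{j-i}k_i$; your main induction does not rely on it, so nothing is lost.
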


\begin{proof} 
For $i=1$ we have $k_1=2>k_0=1$. If $ k_i>\sum_{j=0}^{i-1}k_j$, then
$$k_{i+1}=\left(2k_i\right)!\geq 2k_i>k_i+\sum_{j=0}^{i-1}k_j.$$
We conclude by induction on $i$. \end{proof}

\begin{lemma} \label{divide}
Let $i$ and $ n$ be two positive integers such that $2k_{i-1}<n\leq 2k_i$. Then $n$ divides $k_{i+1}$.
\end{lemma}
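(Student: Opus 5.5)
The plan is to use the defining relation $k_{i+1}=(2k_i)!$ together with the elementary fact that every integer $m$ with $1\leq m\leq M$ divides $M!$. First we check that $n$ lies in the range where this applies. The hypothesis $2k_{i-1}<n$ gives $n\geq 1$; in fact $n\geq 3$, since $k_{i-1}\geq 1$ for $i\geq 1$ (with $k_0=1$ the index is well defined). The hypothesis $n\leq 2k_i$ gives the upper bound. Hence $1\leq n\leq 2k_i$.

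Next we note that $(2k_i)!=\prod_{m=1}^{2k_i} m$ contains $n$ as one of its factors. Therefore $(2k_i)!=n\cdot\prod_{m\neq n} m$, so $n$ divides $(2k_i)!=k_{i+1}$. This holds for any $n$ with $1\le n\le 2k_i$.

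There is no genuine obstacle. The lower bound $2k_{i-1}<n$ is not needed for divisibility; it only places $n$ in a particular block of the partition of the integers by the intervals $(2k_{i-1},2k_i]$, which is presumably how the lemma will be used later. The only point to watch is that $i\geq 1$, so that $k_{i-1}$ is defined, and this is guaranteed by the assumption that $i$ is positive.
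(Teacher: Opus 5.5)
Your proof is correct and is exactly the paper's argument: since $1\leq n\leq 2k_i$, $n$ is one of the factors of $(2k_i)!=k_{i+1}$. You are also right that the lower bound $2k_{i-1}<n$ is not needed for the divisibility; it only matters for how the lemma is applied later.
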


\begin{proof} Since $k_{i+1}= \left(2k_i\right)!$ and $n\leq 2k_i$ then $n$ divides $k_{i+1}$.  \end{proof}

 Now, define a sequence of words $x_i$ related to the sequence  $(k_i)$ as follows: $x_0=1$ and $x_{i+1}=x_ix_i0^{k_{i+1}-2k_{i}}$.  
$$	\begin{array}{lcl}
		x_0&=&1\\
			x_1&=&11\\
			x_2&=&11110^{20}\\
			x_3&=&11110^{20}11110^{48!-28}\\
				x_4&=&11110^{20}11110^{48!-28}11110^{20}11110^{\left[ 2\left(48!\right) \right] !- 48!-28}\\
			\vdots
		\end{array}$$
		
	The sequence	$(x_i)$ tends to an infinite word $u=u_0u_1u_2 \cdots$:
	 $$u=\lim_{i\rightarrow \infty} x_i=11110^{20}11110^{48!-28}\ldots$$ 

 Observe that  $u=u_0u_1u_2\cdots$ can be defined equivalently  by $u_0=1$ and $u_n=1$  when there exists $I\subset \mathbb{N}$ such that $n=\sum_{i\in I} k_i $ and $u_n=0$ otherwise. 

\begin{remark} \label{wind-F_k_i}
We have $|x_i|=k_i$ and  
 $$\forall i\geq 0: u\in \lbrace x_i,\, 0^{k_{i}} \rbrace^{\omega} .$$ 
 In other terms, $\mathcal{F}^{w}_{k_{i}}=\lbrace x_i, 0^{k_{i}}\rbrace$.
 \end{remark}
\begin{remark}
\begin{enumerate}
	\item The word $u$ is recurrent
	\item The word $u$ is not eventually periodic.
\end{enumerate}
 \end{remark}
 
Indeed, it is sufficient to observe that $x_i$ is a prefix of $u$ and appears at least twice  in $u$ for all $i\geq 0$.
The second statement is obtained by observing  that $u$  contains  factors of the form $0^{k_{i}-2k_{i-1}}1$ where $(k_{i}-2k_{i-1})$ is an unbounded sequence.

The table below  gives the first few values of %$\mathcal{F}^{w}_{n} (u)$ and
  $P^{w}_{u}(n).$\\  
 $$
\begin{array}{|r|c|c|c|c|c|c|c|c|c|c|c|c|c|c|c|c}
	\hline
	n	&1&2&3&4&5 &6&7&8&9&10&11&12&13&14&15&\cdots\\
	\hline
	P^{w}_{u}(n)&2&2&3&2&4 &2&3&2&4&3&3&2&4&3&3&\cdots\\
	\hline
\end{array}
$$
$$\begin{array}{r|c|c|c|c|c|c|c|}
	\hline
	\cdots &23&24&25&26&27&28&\cdots\\
	\hline
	\cdots&3&2&3&3&3&2&\cdots\\
	\hline
\end{array}
$$
%\begin{array}{|r|c|c|c|c|c|c|}
%	\hline
%	n	&1&2&3&4&5 &6&7&8&9&10&11&12&13&14&15&16\\
%	\hline
%	P^{w}_{u}(n)	&1&2&3&4&5 &6&7&8&9&10&11&12&13&14&15&16\\
%	\hline
%\end{array}
%$
%
%$$
%\begin{array}{|c|l|l|}
%	\hline
%	n	&\mathcal{F}^w_{n}(u)&P^{w}_{u}(n)\\
%	\hline
%	1	&\left\{0,\, 1\right\}&2\\
%	\hline
%	2	&\left\{00,\, 11\right\}&2\\
%	\hline
%	3	&\left\{000,\, 100,\, 111\right\}&3\\
%	\hline
%	4	&\left\{0000,\, 1111\right\}&2\\
%	\hline
%	5	&\left\{11110,\,00000,\, 00001,\,  11100\right\}&4\\
%	\hline
%	6	& \left\{111100,\,000000\right\}& 2\\
%	\hline
%	7	&\left\{1111000,\,0000000,\, 0001111\right\}&4\\
%	\vdots	&\cdots&\cdots\\
%	\hline
%\end{array}
%$$

\begin{proposition}
The window complexity of $u$ satisfies
$$\forall n\in \mathbb{N}, \ P^w(n)\leq 4.$$
\end{proposition}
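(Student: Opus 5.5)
Fix $n\geq 1$ and choose $i$ with $2k_{i-1}<n\leq 2k_i$ (for small $n$, i.e. $n\leq 2=2k_0$, we check directly or take $i=0$ with the convention that $n\mid k_1=2$). By Lemma \ref{divide}, $n$ divides $k_{i+1}$. By Remark \ref{wind-F_k_i}, $u$ is a concatenation of blocks from $\lbrace x_{i+1},\,0^{k_{i+1}}\rbrace$, each of length $k_{i+1}$ and starting at a position multiple of $k_{i+1}$, hence a multiple of $n$. So every $n$-window of $u$ lies entirely inside one such block, at a position $\equiv 0 \mod n$ within that block. The windows inside a block $0^{k_{i+1}}$ all equal $0^n$. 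It therefore suffices to count the $n$-windows of the finite word $x_{i+1}$ read at positions multiple of $n$, and to show there are at most $4$ of them, $0^n$ included.

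Next write $x_{i+1}=x_ix_i0^{k_{i+1}-2k_i}$. The prefix $x_ix_i$ has length $2k_i$, and $n\leq 2k_i<2n\cdot\ldots$; more precisely, since $n>2k_{i-1}$, the nonzero letters of $x_i$ sit in its prefix $x_{i-1}x_{i-1}$ of length $2k_{i-1}<n$, and $x_i = x_{i-1}x_{i-1}0^{k_i-2k_{i-1}}$. Thus the $1$'s of $x_{i+1}$ lie in two intervals: $[0,2k_{i-1})$ and $[k_i,k_i+2k_{i-1})$, each of length less than $n$. All other letters are $0$. Call these intervals $A$ and $B$.

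Now count. A window at position $jn$ is nonzero only if it meets $A$ or $B$. Since $A$ starts at $0$ and has length $<n$, it meets only the window at $j=0$ (the prefix of length $n$), which gives one word. $B$ has length $<n$, so it meets at most two consecutive windows, giving at most two further words. Adding $0^n$ we get at most $1+2+1=4$ distinct windows. If a window happens to meet both $A$ and $B$ (possible when $k_i<n$), it is still one of the windows counted above, so the bound stands. The windows inside the $0^{k_{i+1}}$ blocks contribute only $0^n$.

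The main obstacle is the bookkeeping: checking that the windows inside $x_{i+1}$ are aligned (this follows from block positions being multiples of $k_{i+1}$ and from $n\mid k_{i+1}$), and checking that each cluster of $1$'s has length strictly less than $n$, so that it straddles at most two windows. The strict inequality $2k_{i-1}<n$ is exactly what guarantees this. I would also verify the edge cases $n=1,2$ directly, where $\mathcal{F}^w_n\subseteq\lbrace 0,1\rbrace$ or $\lbrace 00,11\rbrace$.
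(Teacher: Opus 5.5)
Your proof is correct and follows the paper's route: choose $i$ with $2k_{i-1}<n\le 2k_i$, use $n\mid k_{i+1}$ and $u\in\{x_{i+1},0^{k_{i+1}}\}^\omega$ to reduce to the $n$-windows of $x_{i+1}=x_{i-1}x_{i-1}0^{k_i-2k_{i-1}}x_{i-1}x_{i-1}0^{k_{i+1}-k_i-2k_{i-1}}$. The only difference is at the end: the paper lists the windows explicitly by cases on $k_i \bmod n$, while you count the windows meeting the two clusters of $1$'s (each of length $2k_{i-1}<n$). Your counting is cleaner, and it is the same kind of argument the paper itself uses in Section 7.
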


\begin{proof} Let $n$ be an integer, $n\geq 3$. There exists $i$ such that $2k_{i-1}< n\leq 2k_i$. Then, $n$ divides $k_{i+1}$ by Lemma \ref{divide}.

Since $n$ divides $k_{i+1}$, it results from Remark \ref{wind-F_k_i} above that 
$$\mathcal{F}^{w}_{n}(u)=F^{w}_{n}(x_{i+1})\cup F^{w}_{n}(0^{k_{i+1}})$$
But
\begin{eqnarray}
F^{w}_{n}(x_{i+1})&=&F^{w}_{n}(x_{i}x_{i}0^{k_{i+1}-2k_{i}}) \\ 
                  &=&F^{w}_{n}(x_{i-1}x_{i-1}0^{k_{i}-2k_{i-1}}x_{i-1}x_{i-1}0^{k_{i}-2k_{i-1}}0^{k_{i+1}-2k_{i}})
\end{eqnarray}

Let $k_i\equiv \beta \mod n$, with $0\leq \beta< n$, and $\alpha=n-2k_{i-1}$.

If $n\leq k_{i}$, we get the following.

- If $\beta \in \left\{0,\, 1,\, \ldots,\,n-2k_{i-1}\right\}$ we have  
$$ F^{w}_{n}(u=\left\{x_{i-1}x_{i-1}0^{\alpha},\, 0^{\beta} x_{i-1}x_{i-1}0^{\alpha-\beta},\,  0^n\right\}.$$

- If $\beta \in \left\{n-2k_{i-1}+1,\, \ldots,\,n-1\right\}$ we have  
$$ F^{w}_{n}(u)=\left\{x_{i-1}x_{i-1}0^{\alpha},\,  0^{\beta}Pref_{n-\beta}(x_{i-1}x_{i-1}),\, Suf_{\beta-\alpha}(x_{i-1}x_{i-1})0^{n+\alpha-\beta},\, 0^n\right\}$$

If $n> k_{i}$, then $\beta=k_i$ and we get:

- If $n<k_i+2k_{i-1}$  
$$ F^{w}_{n}(u)=\left\{x_{i-1}x_{i-1}0^{k_{i}-2k_{i-1}}Pref_{n-k_{i}}(x_{i-1}x_{i-1}),\, Suf_{k_i-\alpha}(x_{i-1}x_{i-1})0^{n+\alpha-\beta},\,  0^n\right\}.$$

- If $n\geq k_i+2k_{i-1}$  
$$ F^{w}_{n}(u)=\left\{x_{i-1}x_{i-1}0^{k_{i}-2k_{i-1}}x_{i-1}x_{i-1}0^{n-k_{i}-2k_{i-1}},\,  0^n\right\}.$$

So, it follows the inequality
$$
\ P^{w}(n)\leq 4.$$  
\end{proof}

\begin{corollary} The window complexity of $u$ satisfies:
$$\forall n\geq 2, \ P^w(n)<P(n).$$
\end{corollary}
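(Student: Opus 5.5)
The corollary follows by setting the bound $P^w(n)\le 4$ from the preceding proposition against a lower bound on the classical complexity $P(n)$ of $u$. Since $u$ is aperiodic (second statement of the remark above), Theorem \ref{morse-hedlund} gives $P(n)\geq n+1$ for every $n$: otherwise some $n$ would satisfy $p(n)\leq n$, and $u$ would be eventually periodic. Hence for every $n\geq 4$ we get $P^w(n)\leq 4<5\leq n+1\leq P(n)$, and these values of $n$ are settled.

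It remains to treat $n=2$ and $n=3$. Here the bound $P^w(n)\le 4$ is too weak, so I will use the computed window factors instead.

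For $n=2$, the blocks of $u$ at even positions are $00$ and $11$ only. This is because every $1$-run in $u$ is a copy of $1111$ placed at an even position: all $k_i$ with $i\geq1$ are even, so the positions $\sum_{i\in I}k_i$ where $1$'s begin have the right parity. Thus $P^w(2)=2$. On the other hand $01$, $10$ and $00$ all occur in $u$ (for instance in $11110^{20}1111$), so $P(2)\geq 3>2$.

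For $n=3$, the table gives $P^w(3)=3$. Since $n\geq 3$ is handled by the $P(n)\ge n+1$ argument only once $n+1>P^w(n)$, I will instead exhibit the factors directly. The words $000$, $001$, $011$, $111$, $110$ and $100$ all occur in $11110^{20}1111$, so $P(3)\geq 6>3$.

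The main obstacle is justifying the small values $P^w(2)$ and $P^w(3)$ rigorously rather than by reading them off the table. For $n=2$ the parity argument above does this. For $n=3$ I would note that $3$ divides $k_2=24$, so by Remark \ref{wind-F_k_i} the $3$-window factors are exactly those of $x_2=11110^{20}$ and of $0^{24}$. These are $111$, $100$ and $000$, which gives $P^w(3)=3$.
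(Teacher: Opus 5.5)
Your proof is correct and follows the paper's argument. Morse--Hedlund gives $P(n)\geq n+1$, which beats the bound $P^w(n)\leq 4$ for $n\geq 4$, and the cases $n=2,3$ are settled by $P^w(2)=2$, $P^w(3)=3$. The one difference is that you justify these two values, by the parity argument and by $3\mid k_2=24$ with Remark~\ref{wind-F_k_i}, where the paper only reads them off the table; for $n=3$, Morse--Hedlund already gives $P(3)\geq 4>3$, so listing six factors of length $3$ is unnecessary.
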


\begin{proof} Since $u$ is non-eventually periodic, by Theorem \ref{morse-hedlund} we have 
$$\forall n\geq 0, \ P(n)\geq n+1. $$
 But, we have $ P^w(2)=2<3\leq P(2), \  P^w(3)=3<4\leq P(3)$ and for all $n\geq 4$, using the previous Proposition $ P^w(n)\leq 4<5\leq P(n)$. Thus, the corollary is valid.

 \end{proof}

\section{Window complexity of a  familly of recurrent words}

To contrast the previous Section   we provide in this Section a familly of recurrent words with unbounded window complexity.
\begin{theorem}

Let $0<\alpha<1$.  There exists a recurrent infinite word $u$ such that:
\begin{enumerate}
\item $P^{w}_{u}(n)=O(n^\alpha)$,
\item For infinitely many $n$, $P^{w}_{u}(n)\geq n^\alpha$.
\end{enumerate}
\end{theorem}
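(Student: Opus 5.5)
I will adapt the construction of the previous section: keep the same rapidly growing, highly divisible lengths, but insert controlled ``noise'' so that window factors are numerous without exceeding the target growth. Fix $0<\alpha<1$. Take a sequence $(k_i)$ with $k_0=1$ and $k_{i+1}=(2k_i)!\cdot c_i$, huge and divisible by every $n\le 2k_i$, as in Lemma \ref{divide}. Define blocks recursively by
$$x_{i+1}=x_i\,y_{i,1}\,x_i\,y_{i,2}\cdots x_i\,y_{i,r_i}\,0^{s_i},\qquad |x_{i+1}|=k_{i+1}.$$
Here the words $y_{i,j}$ are chosen to contribute about $m_i^\alpha$ distinct window factors at a chosen scale $m_i$ with $2k_i<m_i$. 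A concrete choice is the following. Take $m_i$ a large multiple of $k_i$, $m_i=t_ik_i$. Inside $x_{i+1}$, place windows of length $m_i$ of the form $x_i\,0^{j}\,x_i\,0^{m_i-2k_i-j}$ for $j$ ranging over $\lfloor m_i^\alpha\rfloor$ distinct multiples of some step. Each such window is aligned at a multiple of $m_i$, and $m_i\mid k_{i+1}$ is enforced. Then $P^w(m_i)\ge m_i^\alpha$ for infinitely many $m_i$, which proves item 2. Recurrence follows as before: $x_i$ is a prefix of $u$ and occurs repeatedly in $x_{i+1}$. Aperiodicity comes from unbounded runs of $0$ (the statement only asks for recurrence, but this matches the section's intent).

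For item 1, I fix $n$ and choose $i$ with $2k_{i-1}<n\le 2k_i$. Then $n\mid k_{i+1}$, so as in Remark \ref{wind-F_k_i} the word $u$ lies in $\{x_{i+1},0^{k_{i+1}}\}^\omega$ with both blocks aligned. Hence $\mathcal{F}^w_n(u)=\mathcal{F}^w_n(x_{i+1})\cup\{0^n\}$, and it suffices to count $n$-windows of the finite word $x_{i+1}$. Windows of length $n$ inside $x_{i+1}$ fall into three kinds. The first kind lies entirely in a zero run. The second kind meets a single copy of $x_i$ (or a pattern $x_iy x_i$ of length about $2k_i$) together with zeros. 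Such a window is determined by its offset relative to that copy, so the offsets actually attained must be counted. The third kind lies inside the finitely many pieces of level $i-1$ structure, and these are controlled by induction.

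The main obstacle is the second kind. The number of distinct offsets of copies of $x_i$ modulo $n$ must be $O(n^\alpha)$ for all $n$ in the range, not just at $n=m_i$. I will handle this by placing the copies of $x_i$ in $x_{i+1}$ only at positions of the form $a\,L_i$ with a gap parameter, where $L_i$ is a multiple of every integer up to $2k_i$ other than near $m_i$. More robustly, the number of copies of $x_i$ inside $x_{i+1}$ will be only $r_i\approx m_i^\alpha$. Then for every $n$ the offsets give at most $r_i+O(1)$ windows per distinct local pattern, and the patterns themselves number $O(k_i)$ at most. For $n\ge m_i$ this gives $O(r_i)=O(n^\alpha)$ provided $k_i\le m_i^\alpha$, which I arrange by taking $t_i$ large. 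For $2k_{i-1}<n<m_i$ I must show the copies of $x_i$ all sit at offset $0$ modulo $n$. To do so I make the $x_i$ copies start at multiples of $(m_i-1)!$ except within the special windows of length $m_i$. Those special windows produce at most $O(k_i)$ distinct $n$-windows, and this is $O(n^\alpha)$ once $k_i\le (2k_{i-1})^\alpha$ fails… The precise bookkeeping here, reconciling the bound for small $n$ with the many offsets, is where I expect to spend the effort. If it fails, I will let the blocks carrying the offsets appear only at scale $m_i$ with $m_i^\alpha\le n^\alpha$ for every $n$ that sees them.
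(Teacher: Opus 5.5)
\textbf{Gap: item 1 is not proved.} Your argument for item 2 is fine in outline (use $\lceil m_i^\alpha\rceil$ rather than $\lfloor m_i^\alpha\rfloor$). Item 1, the bound $P^w_u(n)=O(n^\alpha)$ for \emph{every} $n$, is not established, and the mechanism you sketch works against it.

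Your bookkeeping paragraph does not close:
\begin{itemize}
\item ``$O(k_i)$ distinct $n$-windows'' is nowhere near $O(n^\alpha)$ for $n$ just above $2k_{i-1}$.
\item The case ``$n\ge m_i$'' lies outside the range $2k_{i-1}<n\le 2k_i$ you fixed.
\item ``$r_i+O(1)$ windows per pattern'' times ``$O(k_i)$ patterns'' is a product, not $O(r_i)$.
\end{itemize}

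The underlying problem is structural. You generate diversity at a scale $m_i$ far above $2k_i$ (you even impose $k_i\le m_i^\alpha$). So about $m_i^\alpha$ copies of $x_i$ are individually visible at every scale $n\in(2k_i,m_i)$, and nothing controls their residues modulo such $n$.

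Concretely, take the following choices:
\begin{itemize}
\item the gap set $J$ consists of consecutive multiples $0,s,2s,\dots$ of the step $s=(2k_i)!$, which is what alignment at scales $n\le 2k_i$ calls for;
\item each special window starts at a position divisible by every $n<m_i$, which is the aim of your $(m_i-1)!$ device;
\item $p\in(2k_i,4k_i]$ is prime.
\end{itemize}
Since $x_{i+1}$ is a prefix of $u$, its $p$-windows are $p$-windows of $u$. The second copy of $x_i$ in the special window with gap $j\neq 0$ starts at a position $\equiv k_i+j\pmod p$. Call this residue $r$. The $p$-window containing that position begins with $0^r1$, because the $r$ letters before it lie in $0^j$ and $x_i$ begins with $1$. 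Since $\gcd(s,p)=1$, $r$ takes at least $\min(p,|J|)-1\ge k_i-1$ distinct values. Hence $P^w_u(p)\ge k_i-1\ge p/4-1$, which is of order $p$, not $O(p^\alpha)$.

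\textbf{The missing idea.} The paper never separates the scale at which copies become individually visible from the scale at which diversity is produced. It takes
\[
x_{i+1}=x_i^{l_i}0^{k_{i+1}-l_ik_i},\qquad k_{i+1}=2(l_ik_i)!,\qquad l_{i+1}=\lceil(2l_ik_i)^\alpha\rceil .
\]
For the upper bound, let $l_{i-1}k_{i-1}<n\le l_ik_i$. Then $n\mid k_{i+1}$. Each of the $l_i$ occurrences of the core $x_{i-1}^{l_{i-1}}$, of length $<n$, meets at most two $n$-windows, and every other window is $0^n$. Hence $P^w_u(n)\le 2l_i\le 2\lceil(2n)^\alpha\rceil$.

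For the lower bound, take a Bertrand prime $n\in(l_{i-1}k_{i-1},2l_{i-1}k_{i-1}]$. It is coprime to $k_i$, so the $l_i$ consecutive copies of $x_i$, at positions $jk_i$, fall into $\min(l_i,n)$ distinct residue classes. This gives $P^w_u(n)\ge\min(l_i,n)\ge n^\alpha$.

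The point is that the number of copies, $l_i$, is tuned to the smallest scale at which they are individually visible. All smaller scales divide $k_i$ and so see the copies aligned. No auxiliary scale $m_i$ and no gap set $J$ are needed.
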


\begin{proof} Define the sequences of integers $(l_i)$ and $(k_i)$ as follows: $l_0=2$, $k_0=1$ and $l_{i+1}=\Big\lceil\left(2l_ik_i\right)^{\alpha}\Big\rceil$, $k_{i+1}=2\left(l_ik_i\right)!$. Note that $(l_i)$ is non-decreasing. \\
Consider now the sequence of words $(x_i)$ defined  as  follows: 
$$x_0=1,\ x_{i+1}=x_{i}^{l_i}0^{k_{i+1}-l_{i}k_{i}}$$
Observe that $|x_i|=k_i$ and $x_i$ is a proper prefix of $x_{i+1}$. So,  $(x_i)$ converges to an infinite word $u$. We have also  $u\in\left\{x_i,\,0^{k_{i}}\right\}^\omega$, for all $i$.

\begin{enumerate}

\item Let $n\geq 3$. Choose $i\geq 1$ such that  $l_{i-1}k_{i-1}<n\leq l_ik_i$. Since $n$ divides $k_{i+1}=2(l_ik_i)!$, we have:
$$F^{w}_{n}(u)=F^{w}_{n}(x_{i+1})\cup F^{w}_{n}(0^{k_{i+1}}).$$
But $x_{i+1}=\left(x_{i-1}^{l_{i-1}}0^{k_{i}-l_{i-1}k_{i-1}}\right)^{l_i}0^{k_{i+1}-l_{i}k_{i}}$ and  each occurence of $x_{i-1}^{l_{i-1}}$ intersects at most two windows %of $\left(x_{i-1}^{l_{i-1}}0^{k_{i}-l_{i-1}k_{i-1}}\right)^{l_i}0^{k_{i+1}-l_{i}k_{i}}$
 of length $n$ (the first one intersects only one window). The windows that do not intersect $x_{i-1}^{l_{i-1}}$ are $0^n$.\\
Therefore, $  P_{u}^{w}(n)\leq  1+2(l_i-1)+1=2l_i=2\Big\lceil \left(2l_{i-1}k_{i-1}\right)^{\alpha}\Big\rceil\leq 2\Big\lceil \left(2n\right)^{\alpha}\Big\rceil$.
So, $P_w(n)=O(n^\alpha)$.

\item Let $i\geq 1$. By Bertrand-Chebyshev theorem (Theorem 418, p. 343 in  \cite{11}), there exists a prime number $n$ such that
$$ l_{i-1}k_{i-1}<n\leq2l_{i-1}k_{i-1}.$$ 
Then $\gcd(n,\, k_i)=1$, and $$F^{w}_{n}(u)\supseteq \left\{0^{jk_i\mod n} Pref_{n-(jk_i\mod n)}\left(x_i\right): 0\leq j\leq \min(l_i,\,n)-1\right\}$$
These words begin with $0^{jk_i\mod n}1$. So, they are all distinct. Therefore, $P^{w}_{u}(n)\geq \min(l_i,\,n)=\min\left(2\Big\lceil \left(2l_{i-1}k_{i-1}\right)^{\alpha}\Big\rceil,\, n\right)\geq \min\left(\Big\lceil n^{\alpha}\Big\rceil,\, n\right)\geq n^{\alpha}.$

\end{enumerate}. \end{proof}

\section{Window complexity and uniformly recurrent words}

Here, a property of the window complexity of a  uniformly recurrent word related to   periodicity is given.% using their window complexity.
\begin{theorem} \label{liminf-fini=periodic}

Let $u$ be a uniformly recurrent word such that
 $$\liminf_{n\rightarrow \infty} P^{w}_{u}(n) P^{w}_{u}(n+1) <\infty/.$$
 Then,  $u$ is periodic.

\end{theorem}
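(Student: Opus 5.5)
The plan is to deduce periodicity from Theorem~\ref{morse-hedlund}. The hypothesis gives a constant $C$ and an infinite set $S$ of integers $n$ with $P^{w}_{u}(n)\le C$ and $P^{w}_{u}(n+1)\le C$. If I can show that for every $n\in S$ (large enough) the classical complexity satisfies $P_u(m_n)\le f(C)$ for some length $m_n\to\infty$ and a bound $f(C)$ independent of $n$, then $P_u(m_n)\le m_n$ for $n$ large. Morse--Hedlund then says $u$ is eventually periodic. A uniformly recurrent eventually periodic word is purely periodic: every factor of the periodic tail, in particular every long prefix, recurs inside the tail, so the preperiod can be removed. This last step is routine.

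The heart of the argument is the bound on $P_u(m_n)$, and it has to use both window factorisations together with uniform recurrence. Fix $n\in S$ and write
$$u=W_0W_1W_2\cdots,\qquad u=V_0V_1V_2\cdots,$$
the factorisations into $n$-windows and into $(n+1)$-windows, with at most $C$ distinct $W_k$ and at most $C$ distinct $V_k$. Since $\gcd(n,n+1)=1$, every integer $p\ge n^2$ can be written as $p=an+b(n+1)$ with $a,b\ge 0$ and $b<n$. I would exploit this through the relative offsets of the two block decompositions. Over one period of $n(n+1)$ positions, the boundaries of the two factorisations occur at every possible relative shift exactly once. So the factor of length $n$ starting at any position $p$ is simultaneously a factor of $W_kW_{k+1}$ at offset $p\bmod n$ and of $V_{k'}V_{k'+1}$ at offset $p\bmod(n+1)$.

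From this I would try to show that the factor at $p$ is determined by a bounded amount of data: the pair of consecutive $n$-windows, the pair of consecutive $(n+1)$-windows, and the fact that the two offsets are consistent. That would give a bound of order $C^4$ on the number of factors of some length $m_n$ comparable to $n$. Uniform recurrence would be used to ensure that every factor of length $m_n$ actually occurs at a position $p\ge n^2$ with any prescribed residue pattern. It would also be used to transport the constraint imposed by the windows at one offset to all offsets.

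The main obstacle is precisely this counting step. A naive count gives $C^2\cdot n$ possibilities, since there are $n$ offsets inside a pair of consecutive windows, and this is useless. The saving must come from the interaction of the two coprime factorisations. My first attempt would be a rigidity argument: compare each $(n+1)$-window $V_{k'}$ with the $n$-windows it overlaps. Since $V_{k'}$ starts one letter later relative to the $n$-grid than $V_{k'-1}$ does, the at most $C$ possible values of $V_{k'}$, read against the at most $C$ values of $W_k$, force an identity of the form $u_{j}=u_{j+n}$ or $u_j=u_{j+n+1}$ on long stretches. If this forcing works, it should produce a period directly, which would be an alternative route to the same conclusion. If the direct forcing fails, I would instead restrict to a length $m_n=L$ that is fixed but large. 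Uniform recurrence guarantees that every $n$-window with $n\ge N(L)$ contains all factors of length $L$, and I would then try to show that $P_u(L)$ is bounded by a function of $C$ alone, uniformly in $L$.
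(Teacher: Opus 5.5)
Your proposal has a genuine gap. The step you yourself call the heart of the argument is never carried out, and the version you sketch would not work as stated. The factor of length $n$ at position $p$ is determined by $W_kW_{k+1}$ together with the offset $p\bmod n$. Adding the pair $V_{k'}V_{k'+1}$ does not remove this dependence: the pair of offsets $(p\bmod n,\,p\bmod(n+1))$ ranges over $n(n+1)$ values. So ``consistency of the offsets'' is not a bounded piece of data, and no $C^4$ bound on $P_u(m_n)$ follows. The rigidity you hope for, identities $u_j=u_{j+n}$ or $u_j=u_{j+n+1}$ on long stretches, is also the wrong target. A period of size about $n$ says nothing about the number of factors of length comparable to $n$. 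The fallback with a fixed length $L$ only restates what has to be proved, and the representation $p=an+b(n+1)$ plays no role.

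The missing idea is to turn the one-letter drift you noticed into a pigeonhole argument inside a single fixed pair of windows. For $0\le i\le n-1$, the $(n+1)$-window $V_i$ begins at $(n+1)i=ni+i$, so it occurs at offset exactly $i$ inside $W_iW_{i+1}$.

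\begin{enumerate}
\item The triples $(W_i,W_{i+1},V_i)$ take at most $C^3$ values. Hence some triple $(W,W',V)$ is realized on a set $I$ of at least $n/C^3$ indices, so the single word $V$ of length $n+1$ occurs in the fixed word $WW'$ at every offset $i\in I$.
\item If $n>C^6$, two elements $i<j$ of $I$ satisfy $j-i\le C^3$. Two occurrences of $V$ at distance $d=j-i$ force $V$ to have period $d$.
\item Since $n$ ranges over an infinite set and there are only finitely many candidate periodic patterns, some word $q$ with $|q|\le C^3$ has arbitrarily large powers in $\mathcal{F}(u)$.
\item Uniform recurrence then puts every factor of $u$ inside $q^k$ for $k$ large, so $\mathcal{F}(u)\subseteq\mathcal{F}(q^\omega)$ and $u$ is $|q|$-periodic.
\end{enumerate}

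In your own framework, step 4 says $P_u(L)\le |q|\le C^3$ for every $L$, so Morse--Hedlund plus your final step would also finish. This is exactly the paper's argument.
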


\begin{proof}
Let $u$ be a uniformly recurrent word. If $\liminf P^{w}_{u}(n) P^{w}_{u}(n+1) <\infty$ then,  there exist an integer $M$ and an  increasing sequence $(n_k)$ such that $ P^{w}_{u}(n_{k}) P^{w}_{u}(n_{k}+1) \leq M$. %It follows $ P^{w}_{u}(n_{k})\leq M$ and $ P^{w}_{u}(n_{k}+1)\leq M$.

% In this stape, as in the uniformely recurrent case we show   that $ P^{w}_{u}(n_{k}) $ admits periods with arbitrary size; also to $ P^{w}_{u}(n_{k}+1)$. Thus $u$ is periodic.

%Let $u$ be a uniformly recurrent aperiodic word. Suppose that $P^w(n)$ is bounded, $ie$
%$$\exists M>0: \forall n\in \mathbb{N}, P^w(n)\leq M.$$
Let  $N$ be a term of   $(n_k)$ large enough. One has $P^w(N)\leq M$ and $P^w(N+1)\leq M$. Recall that  $P^w(n)=\#\mathcal{F}^{w}_{n}(u)$. For all $i\in \left\{0,1, \, \ldots, \, N-1\right\}$, let us define the words $x_i$, $y_i$, and $z_i$ as follows:
$$\begin{array}{lcl}

x_i&=&u_{Ni}\cdots u_{Ni+N-1}\\
y_i&=&u_{N\left(i+1\right)}\cdots u_{N\left(i+1\right)+N-1}\\
z_i&=&u_{\left(N+1\right)i}\cdots u_{\left(N+1\right)i+N}\\

\end{array}$$

Then, we have $ x_i,\, y_i\in \mathcal{F}^{w}_{N}\left(u\right)$, $ z_i\in \mathcal{F}^{w}_{N+1}\left(u\right) $ and $z_i$ is a factor of $x_iy_i$. Furthermore, observe that $z_i$ appears in $x_iy_i$ at position $i$.

Thus, it results the following inclusion
$$E=\left\lbrace\left(x_i,y_i, z_i\right):  0\leq i\leq N-1\right\rbrace \subseteq\mathcal{F}_{N}^{w}(u)\times \mathcal{F}_{N}^{w}(u)\times \mathcal{F}_{N+1}^{w}(u).$$

So, $\# E \leq M^3$. Therefore, there exists some triple $(x, y, z)\in E$  for which we have $\# I\geq \frac{N}{M^{3}}$ where $I=\left\lbrace i\in \left\lbrace 0,\, \cdots,\, N-1\right\rbrace : \left( x_i,y_i,z_i\right) =\left( x,y,z\right)  \right\rbrace $. Let us denote by  $|xy|_z$ the number of occurrences of $z$ in $xy$. Since,  $i\in I$  means that $z$ appears in $xy$ at position $i$, we have $|xy|_z\geq \frac{N}{M^3}$ and also
$$\min\left\{j-i: i,\, j\in I, \, i<j\right\}\leq \frac{N-1}{\#I-1}\leq \frac{N-1}{\frac{N}{M^3}-1}$$ 

Now, assume that $N>M^6$. Then $\frac{N}{M^3}-M^3>0$ and  $(\frac{N}{M^3}-1)(M^3+1)>N-1$. Thus, for $N>M^6$ we have $\min\left\{j-i: i,\, j\in I, \, i<j\right\}<M^3+1$.
 %and since $\left(\frac{N}{M^3}-1\right)\left(M^3+1\right)>N-1$ if $N> M^6$.

In other terms,  there exist $i,\,j\in I$ such that $0<j-i\leq M^3$~. Consequently, $z$ is $(j-i)$-periodic. Thus, we have

% So, $|xy|_z\geq \frac{N}{M^3}$ and
%
%$$\min\left\{j-i: i,\, j\in I, \, i<j\right\}\leq \frac{N-1}{\#I-1}\leq \frac{N-1}{\frac{N}{M^3}-1}<M^3+1$$
%since $\left(\frac{N}{M^3}-1\right)\left(M^3+1\right)>N-1$ if $N> M^6$.
%
%Then, there exist $i,\,j\in I$ such that $0<j-i\leq M^3$~. Consequently, $z$ is $(j-i)$-periodic. Thus, we have
$$\forall N>M^6,\exists q\in \mathcal{F}(u): 1\leq |q|\leq M^3\ \textrm{and}\  q^{\left\lfloor \frac{N+1}{\left|q\right|}\right\rfloor}\in \mathcal{F}(u).$$
As there are finitely many such $q$, then there exists $q\in \mathcal{F}(u),\, 1\leq |q|\leq M^3$ and an  infinite subset $K$ of $\mathbb{N}$ such that $q^k$ occurs in $u$ for all $k\in K$.
 
So
$$ \exists q\in L(u),\,   \forall k\in \mathbb{N},\ q^k \in L(u).$$

Thus, either $L(u)=L(q^\omega) $ and then $u$ is $|q|$-periodic; or there exists  $r\in L(u)-L(q^\omega)$ and $r\notin L(q^k)$ for all $k\in \mathbb{N}$. 
This latter  case contradicts uniform recurrence of $u$.  \end{proof}

Theorem \ref{liminf-fini=periodic} leads to the following corollary.

\begin{corollary}
Let $u$ be a uniformly recurrent aperiodic word. Then, the window complexity of $u$ is unbounded.
\end{corollary}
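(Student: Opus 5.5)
The corollary follows from Theorem \ref{liminf-fini=periodic} by contraposition. Let $u$ be uniformly recurrent and aperiodic, and suppose, for a contradiction, that its window complexity is bounded: there is $M\geq 1$ with $P^{w}_{u}(n)\leq M$ for all $n$.

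Then for every $n$ we have $P^{w}_{u}(n)P^{w}_{u}(n+1)\leq M^2$. Hence the sequence $\bigl(P^{w}_{u}(n)P^{w}_{u}(n+1)\bigr)_n$ is bounded, and in particular
$$\liminf_{n\rightarrow\infty}P^{w}_{u}(n)P^{w}_{u}(n+1)\leq M^2<\infty.$$
So $u$ satisfies both hypotheses of Theorem \ref{liminf-fini=periodic}: it is uniformly recurrent, and the $\liminf$ is finite. The theorem then says that $u$ is periodic. This contradicts the assumption that $u$ is aperiodic, since a periodic word is in particular eventually periodic. Therefore $P^{w}_{u}$ is unbounded.

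The only point needing care is the terminology. The paper calls a word aperiodic when it is not eventually periodic, while Theorem \ref{liminf-fini=periodic} concludes that $u$ is periodic. Periodicity implies eventual periodicity, so the contradiction goes through directly. No further work is needed.
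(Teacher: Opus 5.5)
Your proof is correct and is essentially the paper's own argument: boundedness of $P^{w}_{u}$ by $M$ makes $\liminf P^{w}_{u}(n)P^{w}_{u}(n+1)\leq M^2$ finite, so Theorem \ref{liminf-fini=periodic} forces $u$ to be periodic, which contradicts aperiodicity. Your remark that periodic implies eventually periodic is the right way to reconcile the paper's two notions, and it is a small point the paper leaves implicit.
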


\begin{proof}

Let $u$ be a uniformly recurrent aperiodic word. Suppose that $P^w(n)$ is bounded, $i.e.$,
$$\exists M>0: \forall n\in \mathbb{N}, P^w(n)\leq M.$$

Then $\liminf_{n\rightarrow \infty} P^{w}_{u}(n) P^{w}_{u}(n+1) <\infty$. So, by Theorem \ref{liminf-fini=periodic},  $u$ is periodic. This is absurd because $u$ is aperiodic

%Let be $N$ a positive integer  large enough. One has $P^w(N)\leq M$ and $P_w(N+1)\leq M$. From here %(At this stape)
%  as in the uniformely recurrent case we show   that $ P^{w}_{u}(n_{k}) $ admits periods with arbitrary size; also to $ P^{w}_{u}(n_{k}+1)$. Thus $u$ is periodic.
% 
\end{proof}

\bibliographystyle{plain}

\begin{thebibliography}{99}
%\addcontentsline{toc}{chapter}{\protect\numberline{}{References}}

%\bibitem{j-p-al} J.-P.~Allouche, Sur la complexit\'e des suites infinies, \textit{Bull. Belg. Math. Soc.} \textbf{1} (1994), 133--143.
 \bibitem{1} J.-P.  Allouche, Thue,  Combinatorics  on  words,  and  conjectures  inspired  by  the  Thue-Morsesequence, J. Th\'eor. Nombres Bordeaux \textbf{27}(2015), 375-388.


\bibitem{2} J.-P.~Allouche, Surveying some notions of complexity for finite and infinite sequences, in Functions in Number Theory and Their Probabilistic Aspects, Eds. RIMS, Kyoto University {\bf B34} (2012), 27-38.

\bibitem{3} J.-P.~Allouche, J. Shallit. \emph{Automatic Sequences: Theory, Applications, Generalizations}, Cambridge University Press, UK, 2003.

\bibitem{4} J.-P. Allouche, J. Shallit, The ubiquitous Prouhet-Thue-Morse sequence, Sequences and their applications (Singapore, 1998), Discrete Math. Theor.
Comput. Sci., Springer, London, 1999, pp. 1-16

\bibitem{5} J. Berstel, Sturmian and episturmian words (a survey of some recent results), In CAI 2007, LNCS {\bf 4728} , 23-47, 2007.

%\bibitem{berst-seeb1} J. BERSTEL, P. S\'E\'EBOLD, \emph{A characterization of Sturmian morphisms}, in: A. Borziskowski. S. Sokolowski (Eds.). MFCS'93. Lecture Notes in Computer Science 711 (1993) 281-290.

\bibitem{berst-seeb} J. Berstel, P. S\'e\'ebold, \emph{Morphismes de Sturm}, Bull. Belg. Math. Soc. Simon Stevin 1 (1994) 175-189. Journ\'ees Montoise (Mons, 1992).

\bibitem{6} S. Brlek, Enumeration of the factors in the Thue-Morse word, Discr. Appl. Math. 24 (1-3) (1989), 83-96.

%\bibitem{al-baak-cass-damnik} J.-P.~Allouche, M. Baake, J. Cassaigne, D. Damanik, Palindrome complexity, \textit{Theoret. Comput. Sci.} \textbf{292} (2003), 9--31.

%\bibitem{av-frid} S. V. Avgustinovich, D. G. Fon-Der-Flaass, A. E. Frid, \textit{Arithmetical complexity of infinite words}, in:\ Words, Languages and Combinatorics III, World Scientific, Singapore, 2001.

%\bibitem{cassaigne} J. Cassaigne, Complexit\'e et facteurs sp\'eciaux, \textit{Bull. Belg. Math. Soc.} \textbf{4} (1997), 67--88.

%\bibitem{cobham} A. Cobham, Uniform tag sequences, \textit{Math. Systems Theory} \textbf{6} (1972), 164--192.

\bibitem{7} Cant, \emph{Combinatorics, Automata and Number Theory}, V. Berth\'e, M. Rigo (Eds),
Encyclopedia of Mathematics and its Applications 135,  Cambridge University Press (2010). 
 
 \bibitem{8}   J. Cassaigne, I. Kabor\'e, \textit{Abelian complexity and frequencies of letters in infinite words}, Int. J. Found. Comput. Sci., Vol. \textbf{27(}05), pp. 631- 649, 2016.

\bibitem{9} J. Cassaigne, I. Kabor\'e, T. Tapsoba. \emph{On a new notion of complexity on infinite words}, Acta Univ. Sapientiae, Mathematica \textbf{2}(2): 127-136, 2010.


%\bibitem{caka} J. Cassaigne, I. Kabor\'e, \emph{Etude de la complexité du mot de Fibonacci g\'en\'eralis\'e}. in: Proceedings of $11^{th}$ African  Conference on Research in Computer Science and Applied Mathematics, CARI'12,    pp 62-69, 2012.


\bibitem{10} A. Ehrenfeucht, K. P. Lee, G. Rozenberg, Subword complexities of various classes of deterministic developmental languages without interaction, \textit{Theoret. Comput. Sci.} \textbf{1} (1975) 59--75.

\bibitem{11} G. H. Hardy, E. M. Wright, An Introduction to theory of Numbers, Fourth Ed. Oxford University Press, 1960.

\bibitem{12}I. Kabor\'e, T. Tapsoba, Combinatoire de mots r\'ecurrents de complexit\'e $n+2$, \textit{RAIRO-Theoret. Inform. Appl.} \textbf{41} (2007), 425--446.

\bibitem{13} I. Kabor\'e, T. Tapsoba, \textit{Complexit\'e par fen\^etre}, in:\ Proceedings of the $6^{\rm th}$ International Conference on Words (WORDS 2007), pp. 184--188, Institut de Math\'ematiques de Luminy, Marseille (France), 2007.



\bibitem{14} J. Leroy, G. Richomme, \emph{A combinatorial proof of S-adicity for sequences with linear complexity}, INTEGERS, vol 13,  \#A5, 2013. 

\bibitem{15} M. Lothaire, \textit{Algebraic combinatorics on words}, Cambridge University Press, 2002.

\bibitem{16} M. Morse, G. A. Hedlund, \emph{Symbolic dynamics}, Amer. J. Math. \textbf{60} (1938), 815-866.
%\bibitem{loth} M. Lothaire, \emph{Algebraic combinatorics on words}, Cambridge University Press, 2002.


\bibitem{17} F. Mignosi, P. S\'e\'ebold, \emph{Morphismes sturmiens et r\`egles de Rauzy}; J. Th\'eor.  Nombres  Bordeaux, \textbf{5},(1993), 221-233.

%\bibitem{pansiot} J.-J. Pansiot, Complexit\'e des facteurs des mots infinis engendr\'es par morphismes it\'er\'es, in:\ ICALP'84, pp. 380--389, \textit{Lect. Notes Comput. Sci.} \textbf{172}, Springer-Verlag, 1984.

\bibitem{18} N. Pytheas Fogg, \textit{Substitutions in Dynamics, Arithmetics and Combinatorics}, Lect. Notes Math. 1794, Springer-Verlag, 2002.

%\bibitem{taps} T. Tapsoba, Suites infinies, complexit\'e et g\'eom\'etrie, \textit{Rev. CAMES, S\'erie A},  \textbf{6} (2008), 94--96.

\bibitem{rauzy} G. Rauzy, \emph{Suites \`a termes dans un alphabet fini}. S\'emin.  Th\'eorie des nombres (1982-1983) 25-01. 25-16. Bordeaux.

\bibitem{19} A. Thue, \"Uber die gegenseitige Lage gleicher Teile gewisser Zeichenreihen, \textit{Norske Vid. Selsk. Skr. I. Math. Nat. Kl. Christiana} \textbf{1} (1912), 1--67.


\bibitem{20} A. Thue, \"Uber unendliche Zeichenreihen, \textit{Norske Vid. Skr. I. Kl. Christiana} \textbf{7} (1906), 1--22.


\end{thebibliography}

\end{document}